\newtheorem{theorem}{Theorem}[section]
\newtheorem{lemma}[theorem]{Lemma}
\newtheorem{corollary}[theorem]{Corollary}
\newtheorem{proposition}[theorem]{Proposition}
\newtheorem{definition}[theorem]{Definition}
\newcommand{\reals}{\mathbb{R}} 
\newcommand{\realsnonnegative}{\mathbb{R}_{\ge 0}} 
\newcommand{\real}{\mathbb{R}}
\newcommand{\ball}{\mathbb{B}}
\newcommand{\argmax}{\operatorname{argmax}}
\newcommand{\graph}{\mathcal{G}}
\newcommand{\maxerrors}{\mathbbm{M}} 
\newcommand{\vertices}{\mathcal{V}}
\newcommand{\weights}{W} 
\newcommand{\edges}{\mathcal{E}}
\newcommand{\matching}{M} 
\newcommand{\allocation}{\alpha}
\newcommand{\neigh}{\mathcal{N}}
\newcommand{\weight}{w} 
\newcommand{\slack}{s}
\newcommand{\besta}[2]{\texttt{ba}_{{#1} \setminus {#2}}}
\newcommand{\nextbest}[2]{\texttt{bn}_{{#1} \setminus {#2}}}
\newcommand{\convexhull}{\operatorname{co}}
\newcommand{\partner}{\mathcal{P}}
\newcommand{\nomflow}{f}
\newcommand{\oprocendsymbol}{\hbox{$\bullet$}}
\newcommand{\oprocend}{\relax\ifmmode\else\unskip\hfill\fi\oprocendsymbol}
\def\eqoprocend{\tag*{$\bullet$}}
\newcommand{\until}[1]{\{1,\dots,#1\}}
\newcommand{\BB}{\mathcal{B}}
\newcommand{\longthmtitle}[1]{\mbox{}\textup{\bf {(#1).}}}
\newcommand{\NormInf}[1]{\|#1\|_{\infty}}
\newcommand{\setdef}[2]{\{#1 \; |\; #2\}}
\newcounter{MYtempeqncnt}
\newcommand{\myclearpage}{\clearpage}
\renewcommand{\myclearpage}{}
\begin{document}


\title{Distributed bargaining 
  in dyadic-exchange networks}

\author{Dean Richert \qquad Jorge Cort\'es \thanks{A preliminary
    version of this paper appeared in the 2013 American Control
    Conference. The authors are with the Department of Mechanical and
    Aerospace Engineering, University of California, San Diego, CA
    92093, USA, {\tt\small \{drichert,cortes\}@ucsd.edu}}}

\maketitle

\begin{abstract}
  This paper considers dyadic-exchange networks in which individual
  agents autonomously form coalitions of size two and agree on how to
  split a transferable utility. Valid results for this game include
  stable (if agents have no unilateral incentive to deviate), balanced
  (if matched agents obtain similar benefits from collaborating), or
  Nash (both stable and balanced) outcomes.  We design
  provably-correct continuous-time algorithms to find each of these
  classes of outcomes in a distributed way.  Our algorithmic design to
  find Nash bargaining solutions builds on the other two algorithms by
  having the dynamics for finding stable outcomes feeding into the one
  for finding balanced ones.  Our technical approach to establish
  convergence and robustness combines notions and tools from
  optimization, graph theory, nonsmooth analysis, and Lyapunov
  stability theory and provides a useful framework for further
  extensions.  We illustrate our results in a wireless communication
  scenario where single-antenna devices have the possibility of
  working as 2-antenna virtual devices to improve channel capacity.
\end{abstract}

\section{Introduction}\label{sec:introduction}

Networked systems, characterized by distributed interactions among
multiple components and across several layers, are pervasive in modern
engineering problems and also model various biological, economic, and
sociological processes. Our motivation in this work is driven by
resource-constrained networks where collaboration between subsystems
gives rise to a more efficient use of the resources. To this end, we
view each subsystem as a player in a coalitional game where
neighboring players seek to form a match (i.e., a coalition of size
two) and split the corresponding transferable utility between the
matched agents.  Our aim is to synthesize distributed bargaining
mechanisms that agents can employ to decide with whom to collaborate
with and how to allocate the utility.  Our interest in distributed
strategies is motivated by the inherent limitations posed by the
network structure, privacy concerns, and scalability and robustness
considerations for implementation.

\emph{Literature review.}  Examples of networked systems where
performance benefits arise from agents cooperating with each other
towards the achievement of a common goal are by now pervasive, see
e.g.~\cite{WR-RWB:08,FB-JC-SM:08cor,MM-ME:10} and references therein.
Of particular relevance to this paper are scenarios where individual
agents have the ability to carry out their objectives satisfactorily
by themselves, but performance can be improved by collaborating with
others. Examples are also numerous and include resource allocation in
communication
networks~\cite{WS-ZH-ZD-MH-TB:09,SHA-KL-VCML:07,ZZ-SJ-CH-MG-QP:08},
formation creation in networks of UAVs~\cite{DR-JC:13-auto}, network
security~\cite{VMP-MZ-CE-AJ-GP:14}, mobile robot
coordination~\cite{MSS-KHJ-DMS:12}, large-scale data
processing~\cite{CJR-RN-MS:06}, 
and applications in sociology~\cite{TC-SJ-MK-JT:10} and
economics~\cite{AER:84}.  Bargaining problems of the type considered
here are posed on dyadic-exchange networks, so called because agents
can match with at most one other agent~\cite{KSC-RME:78}. Bipartite
matching and assignment problems~\cite{DPB:98} are special cases of
the dyadic-exchange network. Nash bargaining outcomes are an extension
to multi-player games of the classical two-player Nash bargaining
solution~\cite{JN:50-econometrica}. The
works~\cite{JK-ET:08,MB-MH-NI-HM:10} develop centralized methods for
finding such outcomes. In terms of distributed implementations, the
work~\cite{YA-BB-LC-ND-YP:09} provides a discrete-time dynamics that,
given a matching, converges to balanced allocations
and~\cite{MB-CB-JC-YK-AM:14} provides a discrete-time dynamics that
converges to Nash outcomes (without considering dynamics separately
for stable or balanced outcomes). With respect to these works, an
important novelty of the present manuscript is the dynamics and
control perspective on this class of problems, which allows us to
develop a principled technical approach to the study of asymptotic
convergence and robustness. Another area of connection with the
literature is the body of research on distributed algorithms for
solving linear programs~\cite{DPB-JNT:97,MB-GN-FB-FA:12}, including
the authors' previous work~\cite{DR-JC:13-tac}. Our algorithmic design
to find stable outcomes builds on the distributed algorithm proposed
in~\cite{DR-JC:13-tac} because of its mild requirements for
guaranteeing convergence and its robustness properties against
disturbances.

\emph{Statement of contributions.}  We consider dyadic-exchange
networks where individual agents bargain with one another about whom
to match with and how to allocate the transferable utility associated
to a matching. For this scenario, the type of outcomes we are
interested in are Nash bargaining solutions, which combine the notion
of stability and fairness. A stable outcome is one where none of the
agents benefit by unilaterally deviating from their match. A balanced
outcome is one where matched agents benefit equally from the
match, where benefit is defined as the next-best allocation that an
agent could achieve by a unilateral deviation.  The main contribution
of the paper is the design of provably-correct distributed
continuous-time dynamics that find each of these classes of outcomes.
The problem of finding stable outcomes is combinatorial in the number
of edges in the network. Nevertheless, we build on the correspondence
between the existence of stable outcomes and the solutions of a linear
programming relaxation for the maximum weight matching problem on the
graph to synthesize a distributed algorithm that determines stable
outcomes.  Regarding balanced outcomes, we note how finding them
requires each pair of matched agents to solve a system of coupled
nonlinear equations. Based on this observation, we define local (with
respect to $2$-hop information) error functions that measure how far
matched agents' allocations are from being balanced. Our proposed
distributed algorithm has then agents adjust their allocations based
on their balancing errors.  Finally, we interconnect the two
aforementioned dynamics to synthesize a distributed algorithm that
finds Nash bargaining solutions. We show how the `stable outcome' part
of the dynamics allows agents to guess in a distributed way with whom
to match and that this prediction becomes correct in finite time.
Based on this prediction, the `balanced outcome' part of the dynamics
asymptotically converges to a Nash outcome.  As a byproduct of the
systems and control perspective adopted here, we also assess the
robustness properties of the proposed distributed algorithm against
small perturbations. Our technical approach combines notions and tools
from distributed linear programming, graph theory, nonsmooth analysis,
and Lyapunov stability techniques.  We conclude by applying our
results to a wireless communication scenario in which multiple devices
send data to a base station according to a time division multiple
access protocol. Devices may share their transmission time slots in
order to gain an improved channel capacity. Simulation results show
how agents find a Nash outcome in a distributed way, yielding fair
capacity improvements for each matched device and a network-wide
capacity improvement of around $16\%$.


\emph{Organization.} Section~\ref{sec:prelim} introduces notation and
background material used throughout the
paper. Section~\ref{sec:bargain} introduces the problem of designing
distributed algorithms to find stable, balanced, and Nash outcomes.
Sections~\ref{sec:stable},~\ref{sec:balance}, and~\ref{sec:nash}
provide, respectively, our algorithmic solutions for each of these
classes of outcomes. Section~\ref{sec:sims} presents simulations
results and finally Section~\ref{sec:conclusions} gathers our
conclusions and ideas for future work.

\myclearpage
\section{Preliminaries}\label{sec:prelim}

This section introduces basic preliminaries on notation, nonsmooth
analysis, set-valued dynamical systems, and distributed linear
programming.

\subsection{Notation}

We denote the set of real and nonnegative real numbers by $\reals$ and
$\realsnonnegative$, respectively. For a set $X \subseteq \reals^n$,
its intersection with the nonnegative orthant is denoted $X_+ := X
\cap \realsnonnegative^n$. This notation is applied analogously to
vectors and scalars. For $x \in \reals^n$, we use $x \ge 0$ (resp. $x
> 0$) to mean that all components of $x$ are nonnegative
(resp. positive).
We let $\bar{S}$ denote the closure of the set $S$.  Given sets $S_1$
and $S_2$, $S_1 \setminus S_2$ denotes the complement of $S_2$ in
$S_1$. A set $S \subseteq \real^n$ is convex if it fully contains the
segment connecting any two points in $S$.
The set $\ball(x,\delta) \subseteq \reals^n$ is the open ball centered
at $x \in \reals^n$ with radius $\delta > 0$. We use the shorthand
notation $S + \ball(0,\delta)$ to denote the union $\cup_{x \in
  S}\ball(x,\delta)$.  Given a matrix $A \in \real^{n \times m}$, we
let $A_i$ denote the $i^{\operatorname{th}}$ row of $A$ and $a_{i,j}$
its $(i,j)$ element.

\subsection{Nonsmooth analysis}

Here we review some basic notions from nonsmooth analysis
following~\cite{FHC:83}. A function $f:\reals^n \rightarrow \reals$ is
locally Lipschitz at $x \in \reals^n$ if there exist $\delta_x > 0$
and $L_x \ge 0$ such that $\|f(y_1) - f(y_2)\| \le L_x \|y_1-y_2\|$
for $y_1,y_2 \in \ball(x,\delta_x)$.  We refer to $f$ simply as
locally Lipschitz if $f$ is locally Lipschitz at all $x \in \reals^n$.
A locally Lipschitz function is differentiable almost
everywhere. Letting $\Omega_f \subseteq \reals^n$ be the set of points
where the locally Lipschitz function $f$ is not differentiable, the
generalized gradient of $f$ at $x \in \reals^n$ is
\begin{align*}
  \partial f(x) = \convexhull \Big\{ \lim_{i \rightarrow \infty}
  \nabla f(x_i) : x_i \rightarrow x, x_i \notin S \cup \Omega_f
  \Big\},
\end{align*}
where $\convexhull \{ \cdot \}$ denotes the convex hull and $S
\subseteq \reals^n$ is any set with zero Lebesgue measure. We use
$\partial_xg(x,y)$ and $\partial_xg(x,y)$ to denote the generalized
gradient of the maps $x \mapsto g(x,y)$ and $y \mapsto g(x,y)$,
respectively.

A set-valued map $F:\reals^n \rightrightarrows \reals^n$ maps elements
in $\reals^n$ to subsets of $\reals^n$.  A set-valued map $F$ is
locally bounded if for every $x \in \reals^n$ there exists an
$\epsilon > 0$ such that $F(\ball(x,\epsilon))$ is bounded.  A
set-valued map $F$ is upper semi-continuous if for all $x \in
\reals^n$ and $\epsilon \ge 0$, there exists $\delta_x \ge 0$ such
that $F(y) \subseteq F(x) + \ball(0,\epsilon)$ for all $y \in
\ball(x,\delta_x)$. Given a locally Lipschitz function $f:\reals^n
\rightarrow \reals$, the generalized gradient map $x \mapsto \partial
f(x)$ is a locally bounded and upper semi-continuous set-valued map.
Moreover, $\partial f(x)$ is nonempty, convex, and compact for all $x
\in \reals^n$.


\subsection{Set-valued dynamical systems}

We present here basic notions on set-valued dynamical systems
following the exposition of~\cite{JC:08-csm-yo}. A time-invariant
set-valued dynamical system is represented by the differential
inclusion
\begin{align}
  \dot{x} \in F(x), \label{eq:set_val_dyn}
\end{align}
where $F : X \subseteq \reals^n \rightrightarrows \reals^n$ is a
set-valued map.  If $F$ is locally bounded, upper semi-continuous and
takes nonempty, convex, and compact values, then, from any initial
condition, there exists an absolutely continuous curve $x:\reals_{\ge
  0} \rightarrow X$ (called trajectory or solution)
satisfying~\eqref{eq:set_val_dyn} almost everywhere.  The set of
equilibria of $F$ is $\{ x \in X : 0 \in F(x) \}$. A set $\mathcal{M}$
is weakly positively invariant with respect to $F$ if, for any $x_0
\in \mathcal{M}$, $\mathcal{M}$ contains at least one maximal solution
(that is, one that cannot be extended forward in time) of $\dot{x} \in
F(x)$ with initial condition $x_0$.  The set-valued Lie derivative of
a locally Lipschitz function $V:\reals^n \rightarrow \reals$ along $F$
at $x \in \reals^n$ is defined as
\begin{align*}
  \mathcal{L}_FV(x) = \{ a \in \reals : \exists v \in F(x) \text{
    s.t. } v^T\zeta = a \; \forall \zeta \in \partial V(x) \}.
\end{align*}
One can show that, if $\mathcal{L}_FV(x) \subseteq (-\infty,0]$ for
all $x \in \reals^n$, then $V$ is non-increasing along the
trajectories of~\eqref{eq:set_val_dyn}.

The following result helps establish the asymptotic convergence
properties of~\eqref{eq:set_val_dyn}.

\begin{theorem}\longthmtitle{Set-valued LaSalle Invariance
    Principle}\label{th:lasalle} Assume $V:\reals^n \rightarrow
  \reals$ is differentiable, the trajectories
  of~\eqref{eq:set_val_dyn} are bounded, and $F$ is locally bounded,
  upper semi-continuous and takes nonempty, convex, and compact
  values. If $\mathcal{L}_FV(x) \subseteq (-\infty,0]$ for all $x \in
    X$, then any trajectory $t \mapsto x(t)$ of~\eqref{eq:set_val_dyn}
    starting in $X$ converges to the largest weakly positively
    invariant set $\mathcal{M}$ contained in $\overline{\{ x \in X : 0
      \in \mathcal{L}_FV(x)\}}$.
\end{theorem}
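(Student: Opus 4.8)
\emph{Proof sketch.}
The plan is to carry out the classical LaSalle argument, adapted to the differential inclusion~\eqref{eq:set_val_dyn}, through the $\omega$-limit set of a trajectory. Fix a solution $t \mapsto x(t)$ with $x(0) \in X$ and let $\Omega$ be its $\omega$-limit set, i.e., the set of points $p$ such that $x(t_k) \to p$ for some sequence $t_k \to \infty$. Since the trajectory is bounded, $\Omega$ is nonempty and compact, it is contained in $X$ (solutions take values in $X$), and $x(t)$ converges to $\Omega$ as $t \to \infty$: if not, one could find $\varepsilon > 0$ and $t_k \to \infty$ with $x(t_k)$ at distance at least $\varepsilon$ from $\Omega$, and boundedness would then yield a subsequence of $x(t_k)$ converging to a point that by construction lies in $\Omega$, a contradiction. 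Hence it suffices to prove $\Omega \subseteq \mathcal{M}$. As a first observation, because $\mathcal{L}_FV(x) \subseteq (-\infty,0]$ on $X$, the map $t \mapsto V(x(t))$ is non-increasing (as recalled just before the statement) and bounded below (the trajectory is bounded and $V$ is continuous), so it converges to some $c \in \reals$; by continuity of $V$ this forces $V \equiv c$ on $\Omega$.

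Next I would show that $\Omega$ is weakly positively invariant with respect to $F$. Given $p \in \Omega$ with $x(t_k) \to p$, introduce the time-shifted curves $y_k(\cdot) := x(\cdot + t_k)$, each of which is a solution of~\eqref{eq:set_val_dyn}. On any compact interval $[0,T]$ these curves remain in a common bounded set, and since $F$ is locally bounded their derivatives are uniformly bounded there; by the Arzel\`a--Ascoli theorem a subsequence converges uniformly on $[0,T]$ (and, diagonalizing, on compact intervals) to an absolutely continuous curve $y(\cdot)$ with $y(0) = p$. A standard limiting argument, using the upper semi-continuity of $F$ together with the convexity and compactness of its values, shows that $y(\cdot)$ is itself a solution of~\eqref{eq:set_val_dyn}. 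Moreover every $y(t)$ is an $\omega$-limit point of the original trajectory, hence $y(t) \in \Omega$ for all $t \ge 0$, which is exactly weak positive invariance of $\Omega$.

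To conclude, take any $p \in \Omega$ and a solution $y(\cdot)$ of~\eqref{eq:set_val_dyn} with $y(0) = p$ that remains in $\Omega$. Since $V \equiv c$ on $\Omega$, we have $\frac{d}{dt} V(y(t)) = 0$ for almost every $t$. Because $V$ is differentiable, $\partial V(y(t)) = \{ \nabla V(y(t)) \}$ and the chain rule gives $\frac{d}{dt} V(y(t)) = \nabla V(y(t))^T \dot{y}(t)$ with $\dot{y}(t) \in F(y(t))$; thus the vector $v := \dot{y}(t)$ certifies that $0 \in \mathcal{L}_FV(y(t))$. Evaluating at $t = 0$ gives $0 \in \mathcal{L}_FV(p)$, so $\Omega \subseteq \{ x \in X : 0 \in \mathcal{L}_FV(x) \} \subseteq \overline{\{ x \in X : 0 \in \mathcal{L}_FV(x) \}}$. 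Since $\Omega$ is weakly positively invariant and contained in this closed set, the maximality of $\mathcal{M}$ yields $\Omega \subseteq \mathcal{M}$, and therefore $x(t) \to \mathcal{M}$ as $t \to \infty$.

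The step I expect to be the main obstacle is the weak positive invariance of $\Omega$: it uses no property of $V$ at all, but instead the closedness of the solution set of~\eqref{eq:set_val_dyn} under uniform limits, which is precisely where the regularity hypotheses on $F$ (local boundedness, upper semi-continuity, and nonempty convex compact values) enter. The remaining ingredients---monotonicity of $V$ along solutions, constancy of $V$ on $\Omega$, and the passage from ``$V$ constant along a solution lying in $\Omega$'' to ``$0 \in \mathcal{L}_FV$''---are comparatively routine, with the last one being where differentiability of $V$ (rather than mere local Lipschitzness) makes the argument clean.
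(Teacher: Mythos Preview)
The paper does not actually prove Theorem~\ref{th:lasalle}: it is stated in the preliminaries as a known result (following~\cite{JC:08-csm-yo}) and used later without proof. So there is no paper proof to compare your proposal against.

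That said, your sketch is the standard $\omega$-limit-set argument for LaSalle-type results in the differential-inclusion setting, and it is essentially correct. One small technical slip: from $\tfrac{d}{dt}V(y(t)) = 0$ you only get $0 \in \mathcal{L}_FV(y(t))$ for \emph{almost every} $t$, not necessarily at $t=0$, so the claimed inclusion $\Omega \subseteq \{x : 0 \in \mathcal{L}_FV(x)\}$ is not quite justified. The fix is immediate and already implicit in your write-up: pick $t_n \downarrow 0$ in the full-measure set and use continuity of $y$ to conclude $p = y(0) \in \overline{\{x : 0 \in \mathcal{L}_FV(x)\}}$, which is all that is needed. This, incidentally, is exactly why the closure appears in the statement of the theorem.
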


Set-valued dynamical systems are a helpful tool in understanding the
solutions of a differential equation
\begin{align}\label{eq:diff_eq}
  \dot{x} = f(x), 
\end{align}
when $f:\reals^n \rightarrow \reals^n$ is discontinuous. Formally, let
$S_f$ denote the set of points where $f$ is discontinuous. Define the
Filippov set-valued map $\mathcal{F}[f] : \reals^n \rightrightarrows
\reals^n$ by
\begin{align}\label{eq:filippov-set-valued}
  \mathcal{F}[f](x) := \overline{\convexhull}\big\{\lim_{i \rightarrow
    \infty} f(x_i) : x_i \rightarrow x, \; x_i \notin S_f\big\},
\end{align}
where $\overline{\convexhull} \{ \cdot \}$ denotes the closed convex
hull.  If $f$ is measurable and locally bounded, then $\mathcal{F}[f]$
is locally bounded, upper semi-continuous and takes nonempty, convex,
and compact values.  In such case, a solution $t \mapsto x(t)$
of~\eqref{eq:diff_eq} in the Filippov sense is a solution to $ \dot{x}
\in \mathcal{F}[f](x)$.

\subsection{Distributed linear programming}\label{sec:DLP}

Our review here of distributed linear programming follows closely the
exposition in~\cite{SB-LV:04,DR-JC:13-tac}. A standard form linear
program is given by
\begin{align}\label{eq:standard-lp}
  \min\{c^Tx : Ax=b, \; x \ge 0 \},
\end{align}
where $c \in \reals^n, A^{n \times m}$, and $b \in \reals^m$. Its dual
is
\begin{align}\label{eq:dual-lp}
  \max\{-b^Tz : A^Tz \ge c\}.
\end{align}
The point $(x^*,z^*) \in \reals^n \times \reals^m$ satisfies the KKT
conditions for \eqref{eq:standard-lp} if
\begin{align*}
  Ax^* = b, \quad x^* \ge 0, \quad A^Tz^* \ge c, \; \text{ and } \;
  (A^Tz^*+c)^Tx^* = 0.
\end{align*}
When~\eqref{eq:standard-lp} has a finite optimal value, $x^*$
(resp. $z^*$) is a solution to~\eqref{eq:standard-lp} (resp. the
dual~\eqref{eq:dual-lp}) if and only if $(x^*,z^*)$ satisfies the KKT
conditions for~\eqref{eq:standard-lp}.

We have proposed in~\cite{DR-JC:13-tac} the following continuous-time
dynamics to solve the linear program~\eqref{eq:standard-lp},
\begin{subequations}\label{eq:disc_dyn}
  \begin{align}\label{eq:disc_dyn_x}
    \dot{x}_i & = \begin{cases} \nomflow_i(x,z), \hspace{17mm}
      \text{if $x_i>0$,} \\ \max\{0,\nomflow_i(x,z)\}, \quad \text{if
        $x_i = 0$,}
    \end{cases}
    \\
    \dot{z} &= Ax-b, \label{eq:disc_dyn_z}
  \end{align}
\end{subequations}
where the nominal flow function, $\nomflow : \real^n \times \reals^m
\rightarrow \reals^n$, is defined by
\begin{align}\label{eq:nomflow}
  \nomflow = -c-A^T(Ax-b+z).
\end{align}
The convergence properties of this dynamics are captured in the
following result.

\begin{theorem}\longthmtitle{Convergence to a solution of a linear
    program}\label{th:DLP}
  Let $t \rightarrow (x(t),z(t))$ be a trajectory
  of~\eqref{eq:disc_dyn} starting from an initial point in
  $\reals^n_{\ge 0} \times \reals^m$. Then, if~\eqref{eq:standard-lp}
  has a finite optimal value, the following limit exists
  \begin{align*}
    \lim_{t \rightarrow \infty} (x(t),z(t)) = (x^*,z^*),
  \end{align*}
  where $x^*$ (resp. $z^*$) is a solution to~\eqref{eq:standard-lp}
  (resp. the dual of~\eqref{eq:standard-lp}).
\end{theorem}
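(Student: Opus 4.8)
The plan is to read \eqref{eq:disc_dyn} as a (projected) saddle-point flow for an augmented Lagrangian of \eqref{eq:standard-lp} and to invoke the set-valued LaSalle Invariance Principle (Theorem~\ref{th:lasalle}). As a preliminary, since $\nomflow$ is affine the right-hand side of \eqref{eq:disc_dyn} is measurable and locally bounded; it is discontinuous only across the hyperplanes $\{x_i=0\}$, so the associated Filippov set-valued map $F$ is locally bounded, upper semi-continuous, and takes nonempty, convex, compact values. The projection in \eqref{eq:disc_dyn_x} renders $\realsnonnegative^n\times\reals^m$ invariant, so every trajectory starting there has $x(t)\ge 0$ for all $t$. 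Because \eqref{eq:standard-lp} has a finite optimal value, strong duality supplies a KKT pair $(x^*,z^*)$, and such pairs are precisely the equilibria of \eqref{eq:disc_dyn}: $\dot z=0$ is equivalent to $Ax=b$, and the (projected) $\dot x=0$ together with $x\ge 0$ encodes $A^Tz^*+c\ge 0$ and $(A^Tz^*+c)^Tx^*=0$.

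For the Lyapunov analysis I would take the differentiable function $V(x,z)=\tfrac12\|x-x^*\|^2+\tfrac12\|z-z^*\|^2$. Using $Ax^*=b$ (so $Ax-b=A(x-x^*)$) and the elementary property of the projection in \eqref{eq:disc_dyn_x} that $(x-x^*)^T v\le (x-x^*)^T\nomflow(x,z)$ for every Filippov selection $v$ of $\dot x$ and every $x\ge 0$, a direct computation yields, for all $a\in\mathcal L_FV(x,z)$ with $x\ge 0$,
\begin{align*}
  a\le -\|A(x-x^*)\|^2-x^T(A^Tz^*+c)\le 0,
\end{align*}
where the last step uses $x\ge 0$, $A^Tz^*+c\ge 0$, and $(x^*)^T(A^Tz^*+c)=0$. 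Thus $\mathcal L_FV\subseteq(-\infty,0]$, so $V$ is non-increasing and every trajectory remains in the compact set $\{V\le V(x(0),z(0))\}\cap(\realsnonnegative^n\times\reals^m)$; in particular trajectories are bounded.

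Theorem~\ref{th:lasalle} then gives convergence to the largest weakly positively invariant set $\mathcal M$ inside $\overline{\{(x,z):0\in\mathcal L_FV(x,z)\}}$. From the displayed bound, $0\in\mathcal L_FV(x,z)$ forces $Ax=b$ and $x^T(A^Tz^*+c)=0$. The key step is to show $\mathcal M$ consists of equilibria: along any solution in $\mathcal M$ one has $\dot z=Ax-b=0$, hence $z$ is constant, hence $\nomflow(x,z)=-c-A^Tz$ is constant along that solution, so $x$ evolves under a constant projected vector field on the orthant; boundedness together with $Ax\equiv b$ forces this field to vanish in the relevant coordinates, which with the complementarity relation above exhibits every point of $\mathcal M$ as a KKT pair. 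I expect this invariant-set characterization to be the main obstacle.

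To turn convergence-to-$\mathcal M$ into convergence to a point, pick any point $(\bar x,\bar z)$ in the $\omega$-limit set of a given trajectory; by the previous step it is a KKT pair, so the entire Lyapunov argument applies verbatim with $(\bar x,\bar z)$ in place of $(x^*,z^*)$, showing that $t\mapsto\tfrac12\|x(t)-\bar x\|^2+\tfrac12\|z(t)-\bar z\|^2$ is non-increasing. Being an $\omega$-limit point, $(\bar x,\bar z)$ is approached along a subsequence, so this nonnegative non-increasing function converges to $0$, whence $(x(t),z(t))\to(\bar x,\bar z)$. Since $(\bar x,\bar z)$ is a KKT pair, $\bar x$ solves \eqref{eq:standard-lp} and $\bar z$ solves its dual, as claimed.
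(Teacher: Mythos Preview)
The paper does not contain a proof of Theorem~\ref{th:DLP}; it is quoted as a preliminary result from the authors' earlier work~\cite{DR-JC:13-tac}, so there is no in-paper argument to compare your proposal against.

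That said, your outline is the standard route for this class of results and is almost certainly what the cited reference does: interpret~\eqref{eq:disc_dyn} as a projected primal--dual flow, use the quadratic distance to a fixed KKT pair as a Lyapunov function, apply the set-valued LaSalle principle, characterize the largest invariant set as equilibria, and then upgrade set-convergence to pointwise convergence via the $\omega$-limit trick. Your computation of the Lie-derivative bound is correct, as is the projection inequality you invoke, and the final pointwise-convergence step is clean.

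You are right that the invariant-set characterization is the substantive step. One refinement worth noting: from $0\in\mathcal L_FV$ you get $A\bar x=b$ and $\bar x^T(A^Tz^*+c)=0$; the latter, together with primal feasibility of~$\bar x$ and complementary slackness of $(x^*,z^*)$, already shows $c^T\bar x=-b^Tz^*$, i.e., $\bar x$ is primal optimal. Then along any trajectory in $\mathcal M$, $z$ is constant and boundedness forces $A^T\bar z+c\ge 0$; combined with $\bar x$ being optimal and $A\bar x=b$, weak duality gives $\bar x^T(A^T\bar z+c)=c^T\bar x+b^T\bar z\ge 0$, while strong duality forces equality, so $(\bar x,\bar z)$ is a KKT pair. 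This closes the gap you flagged without having to track the transient of $x$ inside~$\mathcal M$.
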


In addition to the asymptotic convergence of~\eqref{eq:nomflow} stated
in Theorem~\ref{th:DLP}, here we mention two additional important
properties of this dynamics: it is robust to disturbances
(specifically, integral input-to-state stable) and amenable to
distributed implementation. We elaborate on the latter next, as it is
relevant for the main developments of the paper.  Suppose that each
component of $x \in \reals^n$ corresponds to the state of an
independent decision maker or agent.
In order for agent $i \in \until{n}$ to be able to implement its
corresponding dynamics in~\eqref{eq:disc_dyn_x}, it needs access to
the following data and states:
\begin{enumerate}
  \item $c_i \in \reals$,
  \item every $b_{\ell} \in \reals$ for which $a_{\ell,i} \not= 0$,
  \item the non-zero elements of every row of $A$ for which the
    $i^{\text{th}}$ component, $a_{\ell,i}$, is non-zero,
  \item the states of every agent $j \in \until{n}$ where $a_{\ell,i}
    \not= 0$ and $a_{\ell,j} \not=0$ for some $\ell \in
    \{1,\dots,m\}$, and
  \item every $z_{\ell}$ for which $a_{\ell,i} \not= 0$.
\end{enumerate}
The agents $j$ for which $a_{\ell,i} \not= 0$ and $a_{\ell,j} \not=0$
for some $\ell \in \{1,\dots,m\}$ (as in (iv) above) are called
neighbors of $i$, denoted $\neigh(i)$. Also, the states $z_{\ell}$ are
auxiliary states whose dynamics can, based on locally available data
and states, be implemented by any agent $i$ for which $a_{\ell,i}
\not= 0$.

\myclearpage
\section{Problem statement}\label{sec:bargain}

The main objective of the paper is the design of provably correct
distributed dynamics that solve the network bargaining game.  This
section provides a formal description of the problem. We begin by
presenting the model for the group of agents and then recall various
important notions of outcome for the network bargaining game.  

Let $\graph = (\vertices,\edges,\weights)$ be an undirected weighted
graph where $\vertices = \{1, \dots, n\}$ is a set of vertices,
$\edges \subseteq \vertices \times \vertices$ is a set of edges, and
$\weights \in \realsnonnegative^{|\edges|}$ is a vector of edge weights
indexed by edges in $\graph$. In an exchange network, vertices
correspond to agents (or players) and edges connect agents who have
the ability to negotiate with each other. The set of agents that $i$
can negotiate with are its neighbors and is denoted by $\neigh(i) :=
\{j:(i,j) \in \edges\}$. Edge weights represent a transferable utility
that agents may, should they come to an agreement, divide between
them. Here, we assume that the network is a \emph{dyadic-exchange}
network, meaning that agents can pair with at most one other
agent. Agents are selfish and seek to maximize the amount they
receive. However if two agents $i$ and $j$ cannot come to an
agreement, they forfeit the entire amount $\weight_{i,j}$.  We
consider bargaining outcomes of the following form.

\begin{definition}\longthmtitle{Outcomes}\label{def:outcome}
  {\rm A \emph{matching} $M \subseteq \edges$ is a subset of edges
    without common vertices.  An \emph{outcome} is a pair
    $(\matching,\allocation)$, where $\matching \subseteq \edges$ is a
    matching and $\allocation \in \reals^n$ is an allocation to each
    agent such that $\allocation_i + \allocation_j = \weight_{i,j}$ if
    $(i,j) \in \matching$ and $\allocation_k = 0$ if agent $k$ is not
    part of any edge in $\matching$.} \oprocend
\end{definition}

In any given outcome $(\matching,\allocation)$, an agent may decide to
unilaterally deviate by matching with another neighbor. As an example,
suppose that $(i,j) \in \matching$ and agent $k$ is a neighbor of $i$.
If $\allocation_i + \allocation_k < w_{i,k}$, then there is an
incentive for $i$ to deviate because it could receive an increased
allocation of $\hat{\allocation}_i = w_{i,k} - \allocation_k >
\allocation_i$. Such a deviation is unilateral because $k$'s
allocation stays constant. Conversely, if $\allocation_i +
\allocation_k \ge w_{i,k}$, then $i$ does not have an incentive to
deviate by matching with~$k$. This discussion motivates the notion of
a \emph{stable outcome}, in which no agent benefits from a unilateral
deviation.

\begin{definition}\longthmtitle{Stable outcome}
  {\rm An outcome $(\matching, \allocation^s)$ is stable if
    $\allocation^s \ge 0$ and
    \begin{align*}
      \hspace{-7mm} \allocation^s_i + \allocation^s_j \ge
      \weight_{i,j}, \quad \forall (i,j) \in \edges. \eqoprocend
    \end{align*}} 
\end{definition}

Given an arbitrary matching $\matching$, it is not always possible to
find allocations $\allocation^s$ such that $(\matching,\allocation^s)$
is a stable outcome. Thus, finding stable outcomes requires to find an
appropriate matching as well, making the problem combinatorial in the
number of possible matchings.

Stable outcomes are not necessarily fair between matched agents, and
this motivates the notion of \emph{balanced} outcomes. As an example,
again assume that the outcome $(\matching,\allocation^b)$ is given and
that $(i,j) \in \matching$. The \emph{best allocation} that $i$ could
expect to receive by matching with a neighbor other than $j$ is
\begin{align*}
  \besta{i}{j}(\allocation^b) = \max_{k \in \neigh(i) \setminus j} \{
  \weight_{i,k} - \allocation^b_k \}_+.
\end{align*} 
Moreover, the set (possibly empty) of \emph{best neighbors} with whom
$i$ could receive this allocation is
\begin{align*}
  \nextbest{i}{j}(\allocation^b) = \argmax_{k \in \neigh(i) \setminus
    j} \{ \weight_{i,k} - \allocation^b_k \}_+.
\end{align*}
Then, if agent $i$ were to unilaterally deviate from the outcome and
match instead with $k \in \nextbest{i}{j}$, the resulting benefit of
this deviation would be
\begin{align*}
  \besta{i}{j}(\allocation^b) - \allocation^b_i.
\end{align*}
When the benefit of a deviation is the same for both $i$ and $j$, we
call the outcome balanced. 

\begin{definition}\longthmtitle{Balanced outcome}\label{def:balanced}
  {\rm An outcome $(\matching,\allocation^b)$ is balanced if for
    all $(i,j) \in \matching$,
    \begin{align*}
      \besta{i}{j}(\allocation^b) - \allocation^b_i =
      \besta{j}{i}(\allocation^b) - \allocation^b_j. \eqoprocend
    \end{align*}}
\end{definition}

From its definition, it is easy to see that the main challenge in
finding balanced outcomes is the fact that the allocations must
satisfy a system of nonlinear (in fact, piecewise linear) equations,
coupled between agents. Of course, outcomes that are both stable and
balanced are desirable and what we seek in this paper. Such outcomes
are called \emph{Nash}.

\begin{definition}\longthmtitle{Nash outcome}
   {\rm An outcome $(\matching,\allocation^N)$ is Nash if it is stable
     and balanced.} \oprocend
\end{definition}

Figure~\ref{fig:outcomes} shows an example of each outcome,
highlighting their various attributes.
\begin{figure}[hbt!]
  \centering
  \subfigure[Stable]{\includegraphics[width=.32\linewidth]{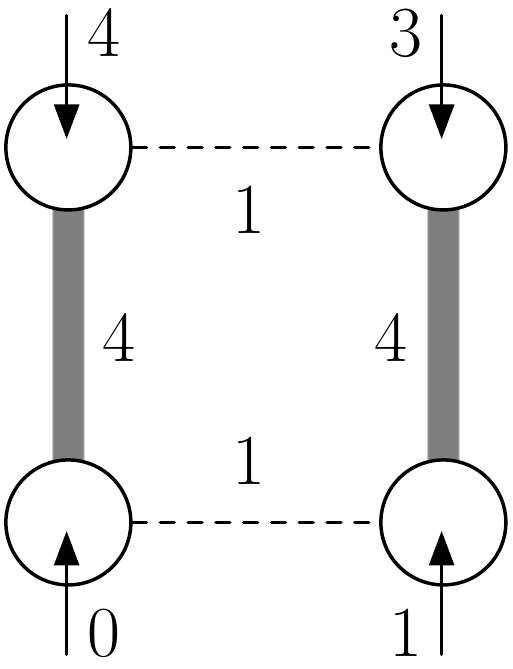}}
  \subfigure[Balanced]{\includegraphics[width=.32\linewidth]{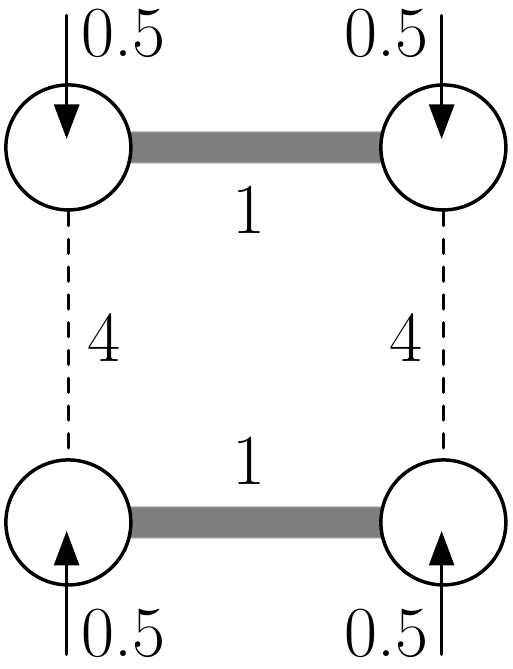}}
  \subfigure[Nash]{\includegraphics[width=.32\linewidth]{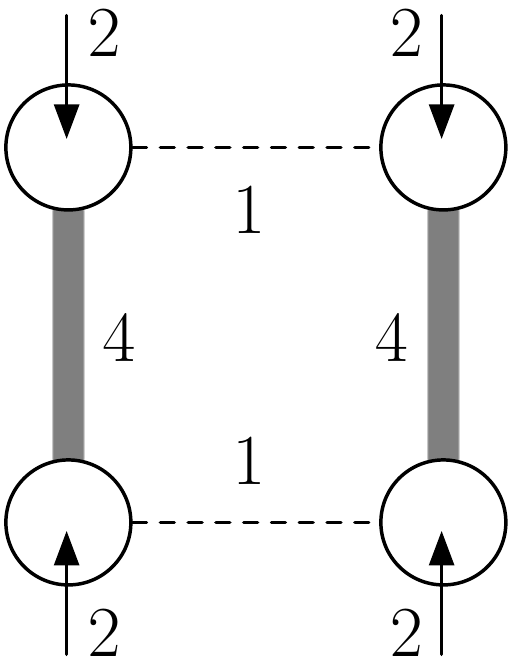}}
  \caption{For each outcome, matched agents are connected with thicker
    grey edges, dotted edges connect agents who decided not to match,
    and allocations are indicated by arrows. In the stable
    outcome, the $0$ allocation is unfair to that node since its
    partner receives the whole edge weight. In the balanced outcome,
    agents can receive higher allocations by deviating from their
    matches. Nash outcomes do not exhibit either of these
    shortcomings.}\label{fig:outcomes}
\end{figure}

The problem we aim to solve is to develop distributed dynamics that
converge to each of the class of outcomes defined above: stable,
balanced, and Nash.  We refer to a dynamics as \emph{$1$-hop
  distributed}, or simply \emph{distributed, over $\graph$} if its
implementation requires each agent $i \in \until{n}$ only knowing (i)
the states of $1$-hop neighboring agents and (ii) the utilities
$w_{i,j}$ for each $j \in \neigh(i)$. Likewise, we refer to a dynamics
as \emph{$2$-hop distributed over $\graph$} if its implementation
requires each agent $i \in \until{n}$ only knowing (i) the states of
$1$- and $2$-hop neighboring agents and (ii) the utilities $w_{i,j}$
and $w_{j,k}$ for each $j \in \neigh(i)$ and $k \in \neigh(j)$.  As
agents' allocations evolve in the dynamics that follow, the quantity
$\weight_{i,j} - \allocation_i(t)$ has the interpretation of ``$i$'s
\emph{offer} to $j$ at time $t$'', thus motivating the terminology
\emph{bargaining} in exchange networks.

\myclearpage
\section{Distributed dynamics to find stable
  outcomes}\label{sec:stable}

In this section, we propose a distributed dynamics to find stable
outcomes in network bargaining. Our strategy to achieve this builds on
a reformulation of the problem of finding a stable outcome in terms of
finding the solutions to a linear program.

\subsection{Stable outcomes as solutions of linear program}

Here we relate the existence of stable outcomes to the solutions of a
linear programming relaxation for the maximum weight matching
problem. This reformulation allows us later to synthesize a
distributed dynamics to find stable outcomes. Our discussion here
follows~\cite{JK-ET:08}, but because that reference does not present
formal proofs of the results we need, we include them here for
completeness.

We begin by recalling the formulation of the \emph{maximum weight
  matching problem} on~$\graph$.  Essentially, this corresponds to a
matching in which the sum of the edge weights in the matching is
maximal. Formally, for every $(i,j) \in \edges$ we use variables
$m_{i,j} \in \{0,1\}$ to indicate whether $(i,j)$ is in the maximum
weight matching (i.e., $m_{i,j} = 1$) or not ($m_{i,j} = 0$). Then,
the solutions of the following integer program can be used to deduce a
maximum weight matching,
\begin{subequations}\label{eq:mwm}
  \begin{alignat}{2}
    &\max && \quad \text{\footnotesize $\sum$}_{(i,j) \in \edges}
    \weight_{i,j} m_{i,j} \\ &\hspace{1.5mm}\text{s.t.} && \quad
    \text{\footnotesize $\sum$}_{j \in \neigh(i)} m_{i,j} \le 1, \quad
    \forall i \in \vertices, \label{eq:con_match}\\ & && \quad m_{i,j}
    \in \{0, 1\}, \quad \forall (i,j) \in
    \edges. \label{eq:combinatorial}
  \end{alignat}
\end{subequations}
The constraints~\eqref{eq:con_match} ensure that each agent is matched
to at most one other agent. If $m^* \in \{0,1\}^{|\edges|}$ is a
solution (indexed by edges in $\graph$) to the above optimization
problem, then a maximum weight matching is well-defined by the
relationship $(i,j) \in \matching \Leftrightarrow m^*_{i,j} =
1$. Since~\eqref{eq:mwm} is combinatorial in the number of edges in
the graph due to constraint~\eqref{eq:combinatorial}, we are
interested in studying its linear programming relaxation,
\begin{alignat}{2}\label{eq:max_match}
  &\max && \quad \text{\footnotesize $\sum$}_{(i,j) \in \edges}
  \weight_{i,j} m_{i,j} \nonumber
  \\
  &\hspace{1.5mm}\text{s.t.} && \quad \text{\footnotesize $\sum$}_{j
    \in \neigh(i)} m_{i,j} \le 1, \quad \forall i \in \vertices,
  \\
  & && \quad m_{i,j} \ge 0, \quad \forall (i,j) \in \edges \nonumber,
\end{alignat}
and its associated dual
\begin{alignat}{2}\label{eq:match_dual}
  &\min && \quad \text{\footnotesize $\sum$}_{i \in \vertices}
  \allocation^s_i \nonumber
  \\
  &\hspace{1.5mm}\text{s.t.} && \quad \allocation^s_i +
  \allocation^s_j \ge \weight_{i,j}, \quad \forall (i,j) \in \edges,
  \\
  & && \quad \allocation^s_{i} \ge 0, \quad \forall i \in
  \vertices. \nonumber
\end{alignat}
Arguing with the KKT conditions for the
relaxation~\eqref{eq:max_match}, the following result states that when
a stable outcome $(\matching,\allocation^s)$ exists, the matching
$\matching$ is a maximum weight matching on $\graph$.

\begin{lemma}\longthmtitle{Maximum weight matchings and stable
    outcomes~\cite{JK-ET:08}}\label{lem:mwm}
  Suppose that a stable outcome $(M,\allocation^s)$ exists for a given
  graph $\graph$. Then $M$ is a maximum weight matching.
\end{lemma}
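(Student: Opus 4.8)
The plan is to exploit linear-programming duality together with complementary slackness between the relaxation~\eqref{eq:max_match} and its dual~\eqref{eq:match_dual}. Given the stable outcome $(\matching,\allocation^s)$, observe that $\allocation^s$ is by definition feasible for the dual~\eqref{eq:match_dual}: the stability inequalities $\allocation^s_i+\allocation^s_j \ge \weight_{i,j}$ are exactly the dual constraints, and $\allocation^s \ge 0$ is required as well. Next I would build a primal-feasible point $m$ from the matching $\matching$ by setting $m_{i,j}=1$ if $(i,j)\in\matching$ and $m_{i,j}=0$ otherwise; since $\matching$ is a matching, each vertex is incident to at most one matched edge, so the packing constraints~\eqref{eq:con_match} hold and $m \ge 0$, hence $m$ is feasible for~\eqref{eq:max_match}.

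The key step is to show that this primal-dual pair $(m,\allocation^s)$ satisfies complementary slackness, which by LP duality certifies that $m$ is optimal for~\eqref{eq:max_match}. There are two families of complementary slackness conditions. First, for each edge $(i,j)$ with $m_{i,j}>0$ (i.e. $(i,j)\in\matching$), the corresponding dual constraint must be tight, $\allocation^s_i+\allocation^s_j=\weight_{i,j}$; but this is precisely the defining property of an outcome in Definition~\ref{def:outcome}. Second, for each vertex $i$ whose dual variable $\allocation^s_i>0$, the primal packing constraint at $i$ must be tight, meaning $i$ is matched; this follows because an agent not part of any edge in $\matching$ has $\allocation^s_i=0$ by Definition~\ref{def:outcome}, so $\allocation^s_i>0$ forces $i$ to be matched, i.e. $\sum_{j\in\neigh(i)}m_{i,j}=1$. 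Assembling these, $(m,\allocation^s)$ is a complementary primal-dual feasible pair, hence $m$ solves~\eqref{eq:max_match}.

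Finally I would argue that optimality of $m$ for the relaxation implies $\matching$ is a maximum weight matching for the original integer program~\eqref{eq:mwm}. Since $m\in\{0,1\}^{|\edges|}$, it is feasible for~\eqref{eq:mwm}, so its objective value $\sum_{(i,j)\in\matching}\weight_{i,j}$ is at most the optimum of~\eqref{eq:mwm}. Conversely, any feasible point of~\eqref{eq:mwm} is feasible for the relaxation~\eqref{eq:max_match}, whose optimum is attained at $m$; hence the optimum of~\eqref{eq:mwm} does not exceed the value of $m$. Equality of the two optima, together with $m$ being an integral optimizer, shows that $\matching$ achieves the maximum total weight over all matchings, i.e. $\matching$ is a maximum weight matching.

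I expect the only subtle point to be the careful bookkeeping of which complementary slackness condition corresponds to which constraint — in particular recognizing that the ``$\allocation_k=0$ for unmatched $k$'' clause of Definition~\ref{def:outcome}, which might otherwise look like an innocuous normalization, is exactly what is needed to verify dual-variable/primal-constraint complementarity. Everything else is a direct invocation of strong duality and complementary slackness for linear programs, which I would cite from standard references such as~\cite{DPB:98}.
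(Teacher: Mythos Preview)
Your proposal is correct and follows essentially the same approach as the paper's own proof: construct the integral primal point $m$ from $\matching$, observe that $\allocation^s$ is dual feasible, verify complementary slackness using the two clauses of Definition~\ref{def:outcome}, conclude $m$ solves the relaxation, and then use integrality of $m$ to deduce that $\matching$ is a maximum weight matching for~\eqref{eq:mwm}. The only difference is expository---you spell out the two complementary slackness families and the relaxation-versus-integer-program inequality more explicitly than the paper does.
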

\begin{proof}
  Our proof method is to encode the matching $M$ using the indicator
  variables $m \in \{0,1\}^{|\edges|}$ and then show that $m$ is a
  solution of the maximum weight matching problem~\eqref{eq:mwm}. To
  begin, for all $(i,j) \in \edges$, let $m_{i,j} = 1$ if $(i,j) \in
  \matching$ and zero otherwise. Use $m$ to denote the vector of
  $m_{i,j}$, indexed by edges in $\graph$. Then $m$ is feasible for
  the relaxation~\eqref{eq:max_match}. By definition of outcome,
  cf. Definition~\ref{def:outcome}, it holds that
  $m_{i,j}(\allocation^s_i + \allocation^s_j - \weight_{i,j}) = 0$ for
  all $(i,j) \in \edges$ and $\allocation^s_i(1-\sum_{j \in \neigh(i)}
  m_{i,j}) = 0$ for all $i \in \vertices$. In other words,
  complementary slackness is satisfied. Also, note that
  $\allocation^s$ is feasible for the dual~\eqref{eq:match_dual}. This
  means that $(m,\allocation^s)$ satisfy the KKT conditions
  for~\eqref{eq:max_match} and so $m$ is a solution
  of~\eqref{eq:max_match}. Since $m$ is integral, it is also a
  solution of~\eqref{eq:mwm} implying that $M$ is a maximum weight
  matching. This completes the proof.
\end{proof}

Building on this result, we show next that the existence of stable
outcomes is directly related to the existence of integral solutions of
the linear programming relaxation~\eqref{eq:max_match}.

\begin{lemma}\longthmtitle{Existence of stable
    outcomes~\cite{JK-ET:08}}\label{lem:exist}
  A stable outcome exists for the graph $\graph$ if and only
  if~\eqref{eq:max_match} admits an integral solution. Moreover, if
  $\graph$ admits a stable outcome and $m^* \in \{0,1\}^{|\edges|}$ is
  a solution to~\eqref{eq:max_match}, then
  $(\matching,\allocation^{s,*})$ is a stable outcome where the
  matching $\matching$ is well-defined by the implication
  \begin{align}
    (i,j) \in \matching \Leftrightarrow m^*_{i,j} =
    1, \label{eq:induced_matching}
  \end{align}
  and $\allocation^{s,*}$ is a solution to~\eqref{eq:match_dual}.
\end{lemma}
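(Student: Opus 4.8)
The plan is to prove the two directions of the biconditional separately, and then verify the ``moreover'' claim, which is really the heart of the matter. For the forward direction---if a stable outcome exists then \eqref{eq:max_match} admits an integral solution---I would invoke Lemma~\ref{lem:mwm} directly: if $(M,\allocation^s)$ is a stable outcome, then $M$ is a maximum weight matching, and encoding $M$ via indicator variables $m^* \in \{0,1\}^{|\edges|}$ (as in the proof of Lemma~\ref{lem:mwm}) shows that $m^*$ solves \eqref{eq:mwm}. Since the feasible region of \eqref{eq:max_match} contains that of \eqref{eq:mwm} and the objectives coincide, and since $m^*$ is feasible for \eqref{eq:max_match}, I would argue $m^*$ is optimal for \eqref{eq:max_match} too: indeed, the proof of Lemma~\ref{lem:mwm} already establishes that $(m^*,\allocation^s)$ satisfies the KKT conditions for \eqref{eq:max_match}, so $m^*$ is a solution of the relaxation. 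Thus \eqref{eq:max_match} has an integral solution.

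For the reverse direction, suppose $m^* \in \{0,1\}^{|\edges|}$ is an integral solution of \eqref{eq:max_match}. Define the matching $M$ via \eqref{eq:induced_matching}; the constraint $\sum_{j\in\neigh(i)} m^*_{i,j} \le 1$ guarantees $M$ is a genuine matching (no shared vertices). Since \eqref{eq:max_match} has a finite optimal value (its feasible set is nonempty---$m=0$ works---and the objective is bounded above because each $m_{i,j}\in[0,1]$), LP duality gives a solution $\allocation^{s,*}$ to the dual \eqref{eq:match_dual}, and $(m^*,\allocation^{s,*})$ satisfies the KKT conditions for \eqref{eq:max_match}. From dual feasibility, $\allocation^{s,*} \ge 0$ and $\allocation^{s,*}_i + \allocation^{s,*}_j \ge \weight_{i,j}$ for all $(i,j)\in\edges$, which is exactly the stability inequality. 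It remains to check that $(M,\allocation^{s,*})$ is an outcome in the sense of Definition~\ref{def:outcome}, i.e., that the equality $\allocation^{s,*}_i + \allocation^{s,*}_j = \weight_{i,j}$ holds on matched edges and $\allocation^{s,*}_k = 0$ for unmatched agents. This is where complementary slackness does the work: $m^*_{i,j}(\allocation^{s,*}_i + \allocation^{s,*}_j - \weight_{i,j}) = 0$ forces the edge equality when $m^*_{i,j}=1$, and $\allocation^{s,*}_i(1 - \sum_{j\in\neigh(i)} m^*_{i,j}) = 0$ forces $\allocation^{s,*}_i = 0$ whenever $i$ is in no matched edge (so that $\sum_j m^*_{i,j} = 0 < 1$). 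Hence $(M,\allocation^{s,*})$ is a stable outcome, proving both the reverse implication and the ``moreover'' statement simultaneously.

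The main subtlety---and the step I would be most careful about---is the very last one: extracting, from complementary slackness, precisely the \emph{equality} conditions in Definition~\ref{def:outcome}, and in particular confirming that ``agent $k$ not part of any edge in $M$'' is equivalent to $\sum_{j} m^*_{k,j} = 0$ (not merely $<1$), so that the second complementary-slackness relation pins $\allocation^{s,*}_k$ to zero. One should note that an unmatched vertex has $m^*_{k,j} = 0$ for every $j$ by the definition \eqref{eq:induced_matching}, hence $\sum_j m^*_{k,j} = 0 \ne 1$, so complementary slackness indeed yields $\allocation^{s,*}_k = 0$. Everything else is a routine appeal to LP strong duality and the KKT characterization recalled in Section~\ref{sec:DLP}.
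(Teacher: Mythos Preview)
Your proposal is correct and follows essentially the same route as the paper: invoke (the proof of) Lemma~\ref{lem:mwm} for the forward direction, and for the converse induce $M$ from an integral optimum $m^*$, take a dual optimum $\allocation^{s,*}$, and use complementary slackness to verify the outcome equalities together with dual feasibility for stability. The extra care you take in checking that $M$ is a valid matching and that unmatched vertices have $\sum_j m^*_{k,j}=0$ is fine and does not depart from the paper's argument.
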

\begin{proof}
  The proof of Lemma~\ref{lem:mwm} revealed that, if a stable outcome
  exists,~\eqref{eq:max_match} admits an integral solution. Let us
  prove the other direction. By assumption,~\eqref{eq:max_match}
  yields an integral solution $m^* \in \{0,1\}^{|\edges|}$ and let
  $\allocation^{s,*}$ be a solution to the
  dual~\eqref{eq:match_dual}. We induce the following matching: $(i,j)
  \in \matching$ if $m^*_{i,j} = 1$ and $(i,j) \notin \matching$
  otherwise. By complementary slackness,
  $m^*_{i,j}(\allocation^{s,*}_i + \allocation^{s,*}_j -
  \weight_{i,j}) = 0$ for all $(i,j) \in \edges$ and
  $\allocation^{s,*}_i(1-\sum_{j \in \neigh(i)} m^*_{i,j}) = 0$ for
  all $i \in \vertices$. Then, it must be that $m^*_{i,j} = 1$ implies
  that $\allocation^{s,*}_i + \allocation^{s,*}_j = w_{i,j}$ and
  $\allocation^{s,*}_i = 0$ if $i$ is not part of any matching. Thus,
  $(M,\allocation^{s,*})$ is a valid outcome. Next,
  $\allocation^{s,*}$ must be feasible for~\eqref{eq:match_dual},
  which reveals that it is a stable allocation. Therefore,
  $(M,\allocation^{s,*})$ is a stable outcome. This completes the
  proof.
\end{proof}


\subsection{Stable outcomes via distributed linear programming}

Since we are interested in finding stable outcomes, from here on we
make the standing assumption that one exists and that the maximum
weight matching is unique. Besides its technical implications,
requiring uniqueness has a practical motivation and is a standard
assumption in exchange network bargaining. For example, if an agent
has two equally good alternatives, it is unclear with whom it will
choose to match with. It turns out that the set of graphs for which a
unique maximum weight matching exists is open and dense in the set of
graphs that admit a stable outcome, further justifying the assumption
of uniqueness of the maximum weight matching.

Given the result in Lemma~\ref{lem:exist} above, finding a stable
outcome is a matter of solving the relaxed maximum weight matching
problem, where the matching is induced from the solution
of~\eqref{eq:max_match} and the allocation is a solution
to~\eqref{eq:match_dual}. Our next step is to
put~\eqref{eq:match_dual} in standard form by introducing slack
variables $\slack_{i,j}$ for each $(i,j) \in \edges$,
\begin{subequations}\label{eq:stable_slack}
  \begin{alignat*}{2}
    &\min && \quad \text{\footnotesize $\sum$}_{i \in \vertices}
    \allocation^s_i \\ &\hspace{1.5mm}\text{s.t.} && \quad
    \allocation^s_i + \allocation^s_j - \slack_{i,j} = \weight_{i,j},
    \quad \forall (i,j) \in \edges, \\ & && \quad \allocation^s_{i}
    \ge 0 \quad \forall i \in \vertices, \\ & && \quad
    \slack_{i,j} \ge 0 \quad \forall (i,j) \in \edges.
  \end{alignat*}
\end{subequations}
We use the notation $s$ to represent the vector of slacks indexed by
edges in $\graph$. In the dynamics that follow, the variables $s$ and
$m$ will be states. Thus, as a convention, we assume that each
$s_{i,j}$ and $m_{i,j}$ are states of agent $\min\{i,j\}$. This means
that the state of agent $i \in \vertices$ is
\begin{align*}
  (\allocation^s_i,\{\slack_{i,j}\}_{j \in
    \neigh^+(i)},\{m_{i,j}&\}_{j \in \neigh^+(i)}) \\ &\in \reals_{\ge
    0} \times \realsnonnegative^{|\neigh^+(i)|} \times
  \reals^{|\neigh^+(i)|},
\end{align*}
where, for convenience, we denote by $\neigh^+(i) := \{j \in
\neigh(i): i < j\}$ the set of neighbors of $i$ whose identity is
greater than~$i$.

Next, using the dynamics~\eqref{eq:disc_dyn} of Section~\ref{sec:DLP}
to solve the linear program above results in the following dynamics
for agent $i \in \until{n}$,
\begin{subequations}\label{eq:stable_dyn}
  \begin{align}
    \dot{\allocation}^s_i = \left\{ 
      \begin{alignedat}{2}
        &\nomflow^{\allocation}_i(\allocation^s,\slack,m), && \quad
        \allocation^s_i > 0,
        \\
        & \max\{0,\nomflow^{\allocation}_i(\allocation^s,\slack,m)\},
        && \quad \allocation^s_i = 0,
      \end{alignedat}\right.
  \end{align}
  and, for each $j \in \neigh^+(i)$
  \begin{align}
    \dot{\slack}_{i,j} &= \left\{
      \begin{alignedat}{2}
        &\nomflow^{\slack}_{i,j}(\allocation^s,\slack,m), && \quad
        \slack_{i,j} > 0,
        \\
        &\max\{0,\nomflow^{\slack}_{i,j}(\allocation^s,\slack,m)\}, &&
        \quad \slack_{i,j} = 0,
      \end{alignedat} \right.
    \\
    \dot{m}_{i,j} &= \allocation^s_i + \allocation^s_j - \slack_{i,j}
    - \weight_{i,j},
  \end{align}
\end{subequations}
where
\begin{align*}
  \nomflow^{\allocation}_i(\allocation^s,\slack,m) := - 1 \! - \!
  \sum_{j \in \neigh(i)} \bigg[m_{i,j} + \allocation^s_i +
    \allocation^s_j - \slack_{i,j} - \weight_{i,j}\bigg],
\end{align*}
and
\begin{align*} 
  \nomflow^{\slack}_{i,j}(\allocation^s,\slack,m) := - m_{i,j} -
  \allocation^s_i - \allocation^s_j + \slack_{i,j} + \weight_{i,j},
\end{align*}
are derived from~\eqref{eq:nomflow}. The next result reveals how this
dynamics can be used as a distributed algorithm to find stable
outcomes.

\begin{proposition}\longthmtitle{Convergence to stable
    outcomes}\label{prop:stab}
  Given a graph $\graph$, let $t \rightarrow
  (\allocation^s(t),\slack(t),m(t))$ be a trajectory
  of~\eqref{eq:stable_dyn} starting from an initial point in
  $\reals^n_{\ge 0} \times \realsnonnegative^{|\edges|} \times
  \reals^{|\edges|}$. Then the following limit exists
  \begin{align*}
    \lim_{t \rightarrow \infty} (\allocation^s(t),\slack(t),m(t)) =
    (\allocation^*,\slack^*,m^*),
  \end{align*}
  where $(\allocation^{s,*},\slack^*)$ (resp. $m^*$) is a solution
  to~\eqref{eq:match_dual} (resp.~\eqref{eq:max_match}). Moreover, if
  a stable outcome exists, a maximum weight matching $M$ is
  well-defined by the implication
  \begin{align*}
    (i,j) \in \matching \quad \Leftrightarrow \quad m^*_{i,j} = 1,
  \end{align*}
  and $(M,\allocation^{s,*})$ is a stable outcome. Finally, the
  dynamics~\eqref{eq:stable_dyn} is distributed over $\graph$.
\end{proposition}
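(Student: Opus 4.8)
My approach is to recognize~\eqref{eq:stable_dyn} as the distributed linear programming dynamics~\eqref{eq:disc_dyn} of Section~\ref{sec:DLP} applied to the standard-form linear program~\eqref{eq:stable_slack}, and then to invoke Theorem~\ref{th:DLP}, after which Lemmas~\ref{lem:exist} and~\ref{lem:mwm} close the argument. To set this up, I take as decision vector $x=(\allocation^s,\slack)$, as dual vector $z=m$, as right-hand side $b=\weights$, as cost $c$ the vector whose first $n$ entries equal $1$ and whose remaining $|\edges|$ entries equal $0$, and as constraint matrix $A$ the one whose row indexed by $(i,j)\in\edges$ has a $1$ in the columns of $\allocation^s_i$ and $\allocation^s_j$ and a $-1$ in the column of $\slack_{i,j}$. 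A direct computation then shows that, with these data, $\nomflow^{\allocation}$ and $\nomflow^{\slack}$ are the components of the nominal flow~\eqref{eq:nomflow} and $\dot m = Ax-b$, so that~\eqref{eq:stable_dyn} is exactly~\eqref{eq:disc_dyn} for~\eqref{eq:stable_slack}. To apply Theorem~\ref{th:DLP} it remains to check that~\eqref{eq:stable_slack} has a finite optimal value: it is feasible (for instance $\allocation^s_i=\max_{(k,l)\in\edges}\weight_{k,l}$ for all $i$ satisfies every constraint) and its objective $\sum_i\allocation^s_i$ is bounded below by $0$ on the feasible set since $\allocation^s\ge 0$; equivalently, finiteness follows from LP duality with~\eqref{eq:max_match}, which is feasible ($m=0$) and bounded. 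The hypothesis that the initial condition lies in $\reals^n_{\ge 0}\times\realsnonnegative^{|\edges|}\times\reals^{|\edges|}$ is precisely the requirement $x\ge 0$ of Theorem~\ref{th:DLP}.

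Theorem~\ref{th:DLP} then gives at once that the limit $(\allocation^{s,*},\slack^*,m^*)$ exists, that $(\allocation^{s,*},\slack^*)$ is a solution of~\eqref{eq:stable_slack}---hence $\allocation^{s,*}$ is a solution of~\eqref{eq:match_dual}, since the two programs have the same objective and the same feasible $\allocation^s$-sections---and that $m^*$ is a solution of the dual of~\eqref{eq:stable_slack} in the sense of~\eqref{eq:dual-lp}. A short computation with the data $(A,b,c)$ above identifies this dual with~\eqref{eq:max_match}, the components of $m$ playing the role of the dual variables, so $m^*$ is a solution of~\eqref{eq:max_match}.

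For the remaining assertions, assume a stable outcome exists. By Lemma~\ref{lem:exist}, \eqref{eq:max_match} admits an integral optimal solution; combined with the standing assumption that the maximum weight matching is unique (and the correspondence of Lemma~\ref{lem:exist}), this forces the optimal solution of~\eqref{eq:max_match} to be unique, and therefore integral, so the $m^*$ produced by the dynamics is exactly this $\{0,1\}$-valued solution. Hence $M=\{(i,j)\in\edges:m^*_{i,j}=1\}$ is a well-defined matching. Since $m^*$ and $\allocation^{s,*}$ are optimal for~\eqref{eq:max_match} and~\eqref{eq:match_dual} respectively, they satisfy complementary slackness, so Lemma~\ref{lem:exist} applies to the pair $(m^*,\allocation^{s,*})$ and yields that $(M,\allocation^{s,*})$ is a stable outcome; Lemma~\ref{lem:mwm} then gives that $M$ is a maximum weight matching. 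Finally, distributedness follows from the sparsity of $A$ together with the convention that $\slack_{i,j}$ and $m_{i,j}$ are states of agent $\min\{i,j\}$: checking the implementation requirements (i)--(v) of Section~\ref{sec:DLP}, agent $i$ needs only $c_i=1$, the utilities $\weight_{i,j}$ for $j\in\neigh(i)$, the nonzero entries of the rows indexed by edges incident to $i$ (which involve only $\allocation^s_i$, $\allocation^s_j$ and $\slack_{i,j}$ for $j\in\neigh(i)$), the states $\allocation^s_j$ of its graph-neighbors, and the auxiliary states $m_{i,j}$ for $j\in\neigh(i)$---all of which is $1$-hop information over $\graph$.

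The step I expect to require the most care is the claim that uniqueness of the maximum weight matching forces~\eqref{eq:max_match} to have a unique (hence integral) optimal solution: a priori the fractional matching polytope can contribute half-integral vertices (whose $1/2$-support is a disjoint union of odd cycles) to the optimal face, and ruling these out when an integral optimum exists requires a short argument or a citation to~\cite{JK-ET:08}. A second, more mechanical point to get exactly right is the bookkeeping that~\eqref{eq:stable_dyn} reproduces~\eqref{eq:disc_dyn} for~\eqref{eq:stable_slack} and that the dual of the latter is~\eqref{eq:max_match}.
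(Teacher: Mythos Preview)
Your proposal is correct and follows essentially the same approach as the paper, which dispatches the proof in one sentence by citing Theorem~\ref{th:DLP}, Lemma~\ref{lem:exist}, and the locality structure of the data. Your version simply spells out the identifications $(x,z,A,b,c)$ and the distributedness check in more detail, and you are right to flag the integrality issue: the paper relies on the standing uniqueness assumption to conclude $m^*\in\{0,1\}^{|\edges|}$ without justifying that uniqueness of the integral maximum weight matching excludes fractional (half-integral) optima of~\eqref{eq:max_match}.
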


The proof of the above results follows directly from
Theorem~\ref{th:DLP}, Lemma~\ref{lem:exist}, and the assumptions made
on the information available to each agent.

\myclearpage
\section{Distributed dynamics to find balanced
  outcomes}\label{sec:balance}

In this section, we introduce distributed dynamics that converge to
balanced outcomes. Our starting point is the availability of a
matching~$\matching$ to the network, i.e., each agent already knows if
it is matched and who its partner is. Hence, the dynamics focuses on
negotiating the allocations to find a balanced one. We drop this
assumption later when considering Nash outcomes.

Our algorithm design is based on the observation that the condition
$\allocation^b_i + \allocation^b_j = \weight_{i,j}$ for $(i,j) \in
\matching$ that defines an allowable allocation for an outcome and the
balance condition in Definition~\ref{def:balanced} for two matched
agents can be equivalently stated as
\begin{subequations}\label{eq:balance_rewritten}
  \begin{align}
    0 \! &= \! \allocation^b_i \! - \! \frac{1}{2}(\weight_{i,j} \! +
    \besta{i}{j}(\allocation^b) \! - \besta{j}{i}(\allocation^b)) =:
    e^b_i(\allocation^b), \label{eq:balance_rewritten_i}
    \\
    0 \! &= \!  \allocation^b_j \! - \! \frac{1}{2}(\weight_{i,j} \! -
    \besta{i}{j}(\allocation^b) \! + \besta{j}{i}(\allocation^b)) =:
    e^b_j(\allocation^b). \label{eq:balance_rewritten_j}
  \end{align}
\end{subequations}
We refer to $e^b_i,e^b_j:\reals^n \rightarrow \reals$ as the errors
with respect to satisfying the balance condition of $i$ and $j$,
respectively. For an unmatched agent $k$, we define $e^b_{k} =
\allocation^b_k$. We refer to $e^b(\allocation^b) \in \reals^n$ as the
vector of \emph{balancing errors} for a given allocation. Based on the
observation above, we propose the following distributed dynamics
whereby agents adjust their allocations proportionally to their
balancing errors,
\begin{align} \label{eq:balance_dyn} \dot{\allocation}^b &=
  -e^b(\allocation^b).
\end{align}
An important fact to note is that the equilibria of the above dynamics
are, by construction, allocations in a balanced outcome. Also, note
that~\eqref{eq:balance_dyn} is continuous and requires agents to know
$2$-hop information, because for its pair of matched agents $(i,j) \in
\matching$, agent $i$ updates its own allocation (and hence its offer
to $j$) based on $\besta{j}{i}$. 

The following result establishes the boundedness of the balancing
errors under~\eqref{eq:balance_dyn} and is useful later in
establishing the asymptotic convergence of this dynamics to an
allocation in a balanced outcome with matching~$\matching$.

\begin{figure*}[!b] 
  \hrulefill \normalsize \setcounter{MYtempeqncnt}{\value{equation}}
  \setcounter{equation}{16}
  \begin{align}\label{eq:lie-V}
    &\mathcal{L}_{\mathcal{F}[F]}V(\xi) = \{a \in \reals :
    \text{there exists } v \in \mathcal{F}[F](\xi,\omega) \text{
      s.t. } a = \zeta^Tv \text{ for all } \zeta \in \partial V(\xi)\}
    \nonumber
    \\
    & \; = \bigg \{a \in \reals : \text{for each } i \in \vertices
    \text{ there exists } \lambda^i \in \reals^n_{\ge 0} \text{ with }
    \hspace{-5mm} \sum_{\tau \in \nextbest{i}{j}(\omega)}\hspace{-5mm}
    \lambda^i_{\tau} = 1 \text{ if } \nextbest{i}{j}(\omega) \not=
    \emptyset \text{ and } \mu^i \in \reals^n_{\ge 0} \text{ with }
    \hspace{-5mm} \sum_{\kappa \in \nextbest{j}{i}(\omega)}
    \hspace{-5mm}\mu^i_{\kappa} = 1 \text{ if } \nonumber
    \\
    & \hspace{8mm} \nextbest{j}{i}(\omega) \not= \emptyset \text{
      s.t. } a \!  = \!  \bigg( \sum_{i \in \vertices} h_i \bigg[ \!-
    \!\xi_i - \tfrac{1}{2}\hspace{-5mm}\sum_{\tau \in
      \nextbest{i}{j}(\omega)}\hspace{-5mm} \lambda^i_{\tau}
    \xi_{\tau} + \tfrac{1}{2}\hspace{-5mm}\sum_{\kappa \in
      \nextbest{j}{i}(\omega)}\hspace{-5mm}
    \mu^i_{\kappa}\xi_{\kappa}\bigg]\bigg)^T\!\!\bigg( \sum_{ i \in
      \maxerrors(\xi)}\hspace{-2mm}\eta_i h_i \xi_i\bigg) \text{ for
      all } \eta \in \reals^n_{\ge 0} \text{ with} \hspace{-2mm} \sum_{ i \in
      \maxerrors(\xi)}\hspace{-2mm} \eta_i = 1 \bigg\} \nonumber
    \\
    & \; = \bigg \{a \in \reals : \text{for each } i \in
    \maxerrors(\xi) \text{ there exists } \lambda^i \in \reals^n_{\ge 0}
    \text{ with } \hspace{-5mm} \sum_{\tau \in
      \nextbest{i}{j}(\omega)}\hspace{-5mm} \lambda^i_{\tau} = 1
    \text{ if } \nextbest{i}{j}(\omega) \not= \emptyset \text{ and }
    \mu^i \in \reals^n_{\ge 0} \text{ with } \hspace{-5mm} \sum_{\kappa \in
      \nextbest{j}{i}(\omega)} \hspace{-5mm}\mu^i_{\kappa} = 1 \text{
      if } \nonumber
    \\
    & \hspace{8mm} \nextbest{j}{i}(\omega) \not= \emptyset \text{
      s.t. } a = \hspace{-2mm} \sum_{i \in \maxerrors(\xi)} \eta_i
    \bigg(-\xi_i^2 - \tfrac{1}{2}\hspace{-5mm}\sum_{\tau \in
      \nextbest{i}{j}(\omega)}\hspace{-5mm} \lambda^i_{\tau}
    \xi_i\xi_{\tau} + \tfrac{1}{2}\hspace{-5mm}\sum_{\kappa \in
      \nextbest{j}{i}(\omega)}\hspace{-5mm}
    \mu^i_{\kappa}\xi_i\xi_{\kappa}\bigg) \text{ for all } \eta \in
    \reals^n_{\ge 0} \text{ with} \hspace{-2mm} \sum_{ i \in
      \maxerrors(\xi)}\hspace{-2mm} \eta_i = 1 \bigg\}.
  \end{align}
  \vspace*{4pt} 
\end{figure*}

\begin{proposition}\longthmtitle{Balancing errors are
    bounded}\label{prop:bounded}
  Given a matching $\matching$, let $t \mapsto \allocation^b(t)$ be a
  trajectory of~\eqref{eq:balance_dyn} starting from any point in
  $\reals^n$. Then
  \begin{align*}
    t \mapsto V(e^b(\allocation^b(t))) := \tfrac{1}{2} \max_{i \in
      \vertices} (e^b_i(\allocation^b(t)))^2
  \end{align*}
  is non-increasing. Thus, $t \mapsto e^b(\allocation^b(t))$ lies in a
  bounded set.
\end{proposition}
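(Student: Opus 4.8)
The plan is to exhibit a locally Lipschitz Lyapunov candidate and show its set-valued Lie derivative along \eqref{eq:balance_dyn} is nonpositive. Write $\tilde{V}(\allocation^b) := V(e^b(\allocation^b)) = \tfrac{1}{2}\max_{i \in \vertices}(e^b_i(\allocation^b))^2$, so that the map in the statement is $t \mapsto \tilde{V}(\allocation^b(t))$. Since each $\besta{i}{j}$ is the nonnegative part of a pointwise maximum of affine functions, $e^b$ is piecewise affine, hence continuous and locally Lipschitz; therefore $\tilde{V}$ is locally Lipschitz and the right-hand side $-e^b$ of \eqref{eq:balance_dyn} is continuous, so its Filippov set-valued map is the singleton $\mathcal{F}[-e^b](\allocation^b)=\{-e^b(\allocation^b)\}$ and any trajectory is an (absolutely continuous) solution of $\dot{\allocation}^b\in\mathcal{F}[-e^b](\allocation^b)$. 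By the non-increase property of the set-valued Lie derivative recalled in Section~\ref{sec:prelim}, it then suffices to show $\mathcal{L}_{\mathcal{F}[-e^b]}\tilde{V}(\allocation^b) \subseteq (-\infty,0]$ for every $\allocation^b \in \reals^n$; granting this, $t\mapsto\tilde{V}(\allocation^b(t))$ is non-increasing, and boundedness of $t \mapsto e^b(\allocation^b(t))$ follows at once from $\tfrac{1}{2}\NormInf{e^b(\allocation^b(t))}^2 = \tilde{V}(\allocation^b(t)) \le \tilde{V}(\allocation^b(0))$.

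To describe $\partial\tilde{V}$, let $\maxerrors(\allocation^b) := \argmax_{i\in\vertices}(e^b_i(\allocation^b))^2$ be the set of agents whose squared balancing error is maximal, and abbreviate $e^b_\ell := e^b_\ell(\allocation^b)$. Combining the chain rule for generalized gradients (with the $C^1$ outer map $t\mapsto\tfrac12 t^2$) with the generalized-gradient formula for a pointwise maximum, every $\zeta\in\partial\tilde{V}(\allocation^b)$ is of the form $\zeta = \sum_{i\in\maxerrors(\allocation^b)} \eta_i\, e^b_i\, \zeta^i$ with $\eta\in\realsnonnegative^{n}$, $\sum_{i\in\maxerrors(\allocation^b)}\eta_i=1$, and $\zeta^i\in\partial e^b_i(\allocation^b)$. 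For an unmatched agent $k$ we have $e^b_k=\allocation^b_k$, hence $\partial e^b_k(\allocation^b)=\{\mathrm{e}_k\}$, where $\mathrm{e}_k$ is the $k$-th coordinate vector. For a matched pair $(i,j)\in\matching$, since $\besta{i}{j}$ is the nonnegative part of the maximum of the affine maps $\allocation^b\mapsto \weight_{i,k}-\allocation^b_k$ over $k\in\neigh(i)\setminus j$, its generalized gradient is contained in the set of vectors $-\sum_{\tau\in\nextbest{i}{j}(\allocation^b)}\lambda^i_\tau\, \mathrm{e}_\tau$ with $\lambda^i_\tau\ge 0$ and $\sum_\tau\lambda^i_\tau\le 1$ (the inequality being strict only when $\besta{i}{j}(\allocation^b)=0$, where $0$ is also a subgradient of $\{\cdot\}_+$ at a maximizing index), and likewise $\partial\besta{j}{i}(\allocation^b)$ consists of vectors $-\sum_{\kappa\in\nextbest{j}{i}(\allocation^b)}\mu^i_\kappa\, \mathrm{e}_\kappa$ with $\mu^i_\kappa\ge 0$, $\sum_\kappa\mu^i_\kappa\le 1$. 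Using $e^b_i = \allocation^b_i - \tfrac12\weight_{i,j} - \tfrac12\besta{i}{j}(\allocation^b) + \tfrac12\besta{j}{i}(\allocation^b)$, every $\zeta^i\in\partial e^b_i(\allocation^b)$ can thus be written $\zeta^i = \mathrm{e}_i + \tfrac12\sum_\tau\lambda^i_\tau\mathrm{e}_\tau - \tfrac12\sum_\kappa\mu^i_\kappa\mathrm{e}_\kappa$.

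The remaining step is the sign estimate, which is the crux of the argument. Any $a\in\mathcal{L}_{\mathcal{F}[-e^b]}\tilde{V}(\allocation^b)$ equals $\zeta^T(-e^b(\allocation^b)) = -\sum_{i\in\maxerrors(\allocation^b)}\eta_i\, e^b_i\,(\zeta^i)^T e^b(\allocation^b)$ for $\zeta$ as above, so it is enough to show $e^b_i\,(\zeta^i)^T e^b(\allocation^b)\ge 0$ for each $i\in\maxerrors(\allocation^b)$. For unmatched such $i$ this is $(e^b_i)^2\ge 0$. For matched $i$, the expansion of $\zeta^i$ gives $(\zeta^i)^T e^b(\allocation^b) = e^b_i + \tfrac12\sum_\tau\lambda^i_\tau e^b_\tau - \tfrac12\sum_\kappa\mu^i_\kappa e^b_\kappa$; multiplying by $e^b_i$ and using that $i\in\maxerrors(\allocation^b)$ forces $\lvert e^b_i\, e^b_\ell\rvert\le (e^b_i)^2$ for every $\ell$, together with $\sum_\tau\lambda^i_\tau\le 1$ and $\sum_\kappa\mu^i_\kappa\le 1$, yields $e^b_i\,(\zeta^i)^T e^b(\allocation^b)\ge (e^b_i)^2\big(1-\tfrac12-\tfrac12\big)=0$. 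Hence $a\le 0$, so $\mathcal{L}_{\mathcal{F}[-e^b]}\tilde{V}(\allocation^b)\subseteq(-\infty,0]$ and the proof is complete. The main obstacle is the careful accounting of the generalized gradients of the $\{\cdot\}_+$-truncated maxima $\besta{i}{j}$ and $\besta{j}{i}$ — in particular establishing $\sum_\tau\lambda^i_\tau\le 1$ and $\sum_\kappa\mu^i_\kappa\le 1$ — since it is precisely this, combined with the maximality of $i\in\maxerrors(\allocation^b)$, that produces the exact cancellation $1-\tfrac12-\tfrac12=0$; everything else is routine bookkeeping.
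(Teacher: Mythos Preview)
Your argument is correct, and the core estimate---bounding the cross terms $e^b_i e^b_\tau$ and $e^b_i e^b_\kappa$ by $(e^b_i)^2$ using the maximality of $i\in\maxerrors$---is the same as the paper's. The setup, however, is genuinely different. You work directly with $\tilde{V}=V\circ e^b$ on the $\allocation^b$-space and invoke Clarke's chain and max rules to obtain an outer description of $\partial\tilde V$; since $-e^b$ is continuous, the Filippov map is a singleton and the Lie-derivative estimate reduces to a pointwise sign check. The paper instead \emph{lifts} to an auxiliary state $(\xi,\omega)$, writes down a discontinuous differential equation whose Filippov solutions contain $t\mapsto(e^b(\allocation^b(t)),\allocation^b(t))$, and computes the set-valued Lie derivative of the simpler function $V(\xi)=\tfrac12\max_i\xi_i^2$ along the Filippov map of the lifted dynamics. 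Your route is shorter and avoids the auxiliary system, at the price of relying on Clarke calculus inclusions (sum/chain/max rules) whose validity in this nonregular setting deserves a word of justification---note that $e^b_i$ is a difference of convex functions, so regularity cannot be taken for granted, though the \emph{inclusion} versions of those rules suffice for your sign argument. The paper's route is heavier but pays off later: the $(\xi,\omega)$ framework is reused verbatim in the proof of Proposition~\ref{prop:balanced}, where LaSalle is applied to the lifted dynamics to establish convergence, so the extra machinery is not wasted.
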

\begin{proof}
  Our proof strategy is to compute, for each $i \in \vertices$, the
  Lie derivative of $e^b_i$ along the trajectories
  of~\eqref{eq:balance_dyn}. Based on these Lie derivatives, we
  introduce a new dynamics whose trajectories contain $t \mapsto
  e^b(\allocation^b(t))$ and establish the result reasoning with it.

  Since $e^b_i$ is locally Lipschitz, it is differentiable almost
  everywhere. Let $\Omega_i \subseteq \reals^n$ be the set, of measure
  zero, of allocations for which $e^b_i$ is not differentiable. If $i$
  is matched, say $(i,j) \in \matching$, then $\Omega_i$ is precisely
  the set of allocations where at least one of the next best neighbor
  sets $\nextbest{i}{j}(\allocation^b)$ or
  $\nextbest{j}{i}(\allocation^b)$ have more than one element. If $i$
  is unmatched, then $\Omega_i = \emptyset$.  Then, whenever
  $\allocation^b \in \reals^n \setminus \Omega_i$, it is easy to see
  that for every $i \in \vertices$,
  \begin{align*}
    \mathcal{L}_{-e^b}e^b_i(\allocation^b) =
    \begin{cases}
      -e^b_i(\allocation^b), &\text{if $i$ is unmatched or
      }
      \\
      &\text{if }\nextbest{i}{j}(\allocation^b) = \emptyset
      \\
      &\text{and } \nextbest{j}{i}(\allocation^b) = \emptyset
      \\
      -e^b_i(\allocation^b) + \tfrac{1}{2} e^b_{\tau}(\allocation^b),
      &\text{if } \nextbest{i}{j}(\allocation^b) = \tau
      \\
      &\text{and } \nextbest{j}{i}(\allocation^b) = \emptyset
      \\
      -e^b_i(\allocation^b) - \tfrac{1}{2}
      e^b_{\kappa}(\allocation^b), &\text{if }
      \nextbest{i}{j}(\allocation^b) = \emptyset
      \\
      &\text{and } \nextbest{j}{i}(\allocation^b) = \kappa
      \\
      -e^b_i(\allocation^b) + \tfrac{1}{2} e^b_{\tau}(\allocation^b) &
      \hspace{-3.5mm} - \;\tfrac{1}{2} e^b_{\kappa}(\allocation^b),
      \\
      &\text{if } \nextbest{i}{j}(\allocation^b) = \tau
      \\
      & \text{and }\nextbest{j}{i}(\allocation^b) = \kappa.
    \end{cases}
  \end{align*}
  This observation motivates our study of the dynamics
  \setcounter{equation}{15}
  \begin{subequations}\label{eq:xi-omega-dyn}
    \begin{align}
      \dot{\xi}_i =
      \begin{cases}
        -\xi_i, &\text{if $i$ is unmatched or }
        \\
        &\text{if }\nextbest{i}{j}(\omega) = \emptyset
        \\
        &\text{and } \nextbest{j}{i}(\omega) = \emptyset
        \\
        -\xi_i + \tfrac{1}{2} \xi_{\tau}, &\text{if }
        \nextbest{i}{j}(\omega) = \tau
        \\
        &\text{and } \nextbest{j}{i}(\omega) = \emptyset
        \\
        -\xi_i - \tfrac{1}{2} \xi_{\kappa}, &\text{if }
        \nextbest{i}{j}(\omega) = \emptyset
        \\
        &\text{and } \nextbest{j}{i}(\omega) = \kappa
        \\
        -\xi_i + \tfrac{1}{2}\xi_{\tau} - \tfrac{1}{2} \xi_{\kappa},
        &\text{if } \nextbest{i}{j}(\omega) = \tau
        \\
        & \text{and }\nextbest{j}{i}(\omega) = \kappa.
      \end{cases}
      \\
      & \hspace{-70.5mm} \dot{\omega}_i = -\xi_i
    \end{align}
  \end{subequations}
  for every $i \in \vertices$, defined on $\reals^n \times (\reals^n
  \setminus \Omega)$, where $\Omega := \cup_{i \in \vertices}
  \Omega_i$.  For convenience, we use the shorthand notation $F =
  (F^1,F^2) : \reals^n \times (\reals^n \setminus \Omega) \rightarrow
  \reals^n \times \reals^n$ to refer to~\eqref{eq:xi-omega-dyn}. Note
  that $F$ is piecewise continuous (because $F^1$ is piecewise
  continuous, while $F^2$ is continuous). Therefore, we understand its
  trajectories in the sense of
  Filippov. Using~\eqref{eq:filippov-set-valued}, we compute the
  Filippov set-valued map, defined on $ \reals^n \times \reals^n$, for
  any matched $i$ and $(\xi,\omega) \in \reals^n \times \reals^n$, as
  \begin{align*}
    \mathcal{F}[F^1_i](&\xi,\omega) =
    \Big\{  -\xi_i - \tfrac{1}{2}\hspace{-5mm}\sum_{\tau \in
      \nextbest{i}{j}(\omega)}\hspace{-5mm} \lambda^i_{\tau}
    \xi_{\tau} + \tfrac{1}{2}\hspace{-5mm}\sum_{\kappa \in
      \nextbest{j}{i}(\omega)}\hspace{-5mm} \mu^i_{\kappa}\xi_{\kappa}
    :
    \\
    &\quad \lambda^i \in \reals^n_{\ge 0} \text{ is s.t. } \hspace{-5mm}
    \sum_{\tau \in \nextbest{i}{j}(\omega)}\hspace{-5mm}
    \lambda^i_{\tau} = 1 \text{ if } \nextbest{i}{j}(\omega) \not=
    \emptyset
    \\
    & \quad \text{and } \mu^i \in \reals^n_{\ge 0} \text{ is s.t. }
    \hspace{-5mm} \sum_{\kappa \in \nextbest{j}{i}(\omega)}
    \hspace{-5mm}\mu^i_{\kappa} = 1 \text{ if }
    \nextbest{j}{i}(\omega) \not= \emptyset \Big\}.
  \end{align*}
  Here, we make the convention that the empty sum is zero. If $i$ is
  unmatched, then $\mathcal{F}[F^1_i](\xi,\omega) =
  \{-\xi_i\}$. Furthermore, $ \mathcal{F}[F^2_i] = \{-\xi_i\}$ for all
  $i \in \vertices$.  Based on the discussion so far, we know that $t
  \mapsto (e^b(\allocation^b(t)),\allocation^b(t))$ is a Filippov
  trajectory of~\eqref{eq:xi-omega-dyn} with initial condition
  $(e^b(\allocation^b(0)),\allocation^b(0)) \in \reals^n \times
  \reals^n$. Thus, to prove the result, it is sufficient to establish
  the monotonicity of
  \begin{align*}
    V(\xi) = \max_{i \in \vertices} \frac{1}{2} \xi_i^2 ,
  \end{align*}
  along~\eqref{eq:xi-omega-dyn}. For notational purposes, we denote
  \begin{align*}
    \maxerrors(\xi) := \argmax_{i \in \vertices} \frac{1}{2}\xi_i^2.
  \end{align*}
  The generalized gradient of $V$ is
  \begin{align*}
    \partial V(\xi) = \Big\{ \sum_{i \in \maxerrors(\xi)}
    \hspace{-2mm} \eta_i h_i \xi_i : \eta \in \reals^n_{\ge 0} \text{ s.t. }
    \hspace{-2mm} \sum_{i \in \maxerrors(\xi)} \hspace{-2mm} \eta_i =
    1 \Big\},
  \end{align*}
  where $h_i \in \reals^n$ is the unit vector with $1$ in its
  $i^{\operatorname{th}}$ component and $0$ elsewhere. Then, the
  set-valued Lie derivative of $V$ along $\mathcal{F}[F]$ is given
  in~\eqref{eq:lie-V}. To upper bound the element in
  $\mathcal{L}_{\mathcal{F}[F]}V(\xi)$, note that
  \begin{align*}
    -\tfrac{1}{2}\hspace{-5mm}\sum_{\tau \in
      \nextbest{i}{j}(\omega)}\hspace{-5mm} \lambda^i_{\tau}
    \xi_i\xi_{\tau} \le \tfrac{1}{4}\hspace{-5mm} \sum_{\tau \in
      \nextbest{i}{j}(\omega)}\hspace{-5mm}\lambda^i_{\tau}(\xi_i^2 +
    \xi_{\tau}^2),
  \end{align*}
  where we have used the inequality $ab \le \tfrac{1}{2}a^2 +
  \tfrac{1}{2}b^2$ for $a,b \in \reals$. For $\sum_{\tau \in
    \nextbest{i}{j}(\omega)} \lambda^i_{\tau} = 1$ and $i \in
  \maxerrors(\xi)$ (that is $\xi_i^2 \ge \xi_{\tau}^2$ for all $\tau
  \in \nextbest{i}{j}(\omega)$), we can further refine the bound as,
  \begin{align*}
    \tfrac{1}{2}\hspace{-5mm}\sum_{\tau \in
      \nextbest{i}{j}(\omega)}\hspace{-5mm} \lambda^i_{\tau}
    \xi_i\xi_{\tau} \le \tfrac{1}{2} \xi_i^2,
  \end{align*}
  The analogous bound
  \begin{align*}
    \tfrac{1}{2}\hspace{-5mm}\sum_{\kappa \in
      \nextbest{j}{i}(\omega)}\hspace{-5mm} \mu^i_{\kappa}
    \xi_i\xi_{\kappa} \le \tfrac{1}{2} \xi_i^2,
  \end{align*}
  can be derived similarly if $\sum_{\kappa \in
    \nextbest{j}{i}(\omega)} \mu^i_{\kappa} = 1$ and $i \in
  \maxerrors(\xi)$. Using these bounds in the Lie
  derivative~\eqref{eq:lie-V} and noting that $\sum_{i \in
    \maxerrors(\xi)}\eta_i = 1$, it is straightforward to see that for
  any element $a \in \mathcal{L}_{\mathcal{F}[F]}V(\xi)$ it holds
  that $a \le 0$. It follows that $t \mapsto V(\xi(t))$ and thus $t
  \mapsto V(e^b(\allocation^b(t)))$ is non-increasing and $t \mapsto
  e^b(\allocation^b(t))$ lies in the bounded set
  $V^{-1}(e^b(\allocation^b(0)))$, which completes the proof.
\end{proof}

The next result establishes the local stability of the balanced
allocations associated with a given matching and plays a key role
later in establishing the global asymptotic pointwise convergence of
the dynamics~\eqref{eq:balance_dyn}.

\begin{proposition}\longthmtitle{Local stability of each balanced
    allocation}\label{prop:local_stability}
  Given a matching $\matching \subseteq \edges$, let $\BB_\matching =
  \setdef{\allocation^{b,*} \in \reals^n}{(M,\allocation^{b,*}) \text{
      is a balanced outcome}}$. Then every allocation in
  $\BB_\matching$ is locally stable under the
  dynamics~\eqref{eq:balance_dyn}.
\end{proposition}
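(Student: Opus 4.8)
The plan is to prove Lyapunov stability of each fixed point $\allocation^{b,*} \in \BB_\matching$ by exhibiting a nonsmooth Lyapunov function, namely the $\infty$-norm distance to $\allocation^{b,*}$. First note that, by construction of the balancing errors in~\eqref{eq:balance_rewritten}, the equilibria of~\eqref{eq:balance_dyn} are precisely the allocations in $\BB_\matching$; hence $e^b(\allocation^{b,*}) = 0$, where in particular $\allocation^{b,*}_k = 0 = e^b_k(\allocation^{b,*})$ for every unmatched $k$ (cf.\ Definition~\ref{def:outcome}). Since $e^b$ is globally Lipschitz — it is built from affine maps, $\max$, and $\{\cdot\}_+$ — the trajectories of~\eqref{eq:balance_dyn} are unique and $\mathcal{F}[-e^b](\allocation^b) = \{-e^b(\allocation^b)\}$. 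I would then show that $W(\allocation^b) := \NormInf{\allocation^b - \allocation^{b,*}}$ is non-increasing along every trajectory; Lyapunov stability follows at once, taking $\delta = \varepsilon$ so that $\NormInf{\allocation^b(0) - \allocation^{b,*}} < \delta$ keeps the trajectory inside $\{\, W < \varepsilon \,\}$ for all $t \ge 0$ (and one passes to any other norm by equivalence).

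The heart of the matter is the following coordinatewise estimate: if an index $i$ attains $|\allocation^b_i - \allocation^{b,*}_i| = W(\allocation^b)$, then $(\allocation^b_i - \allocation^{b,*}_i)\, e^b_i(\allocation^b) \ge 0$. For unmatched $i$ this is clear because $e^b_i(\allocation^b) = \allocation^b_i - \allocation^{b,*}_i$. For $(i,j) \in \matching$, I would use $e^b_i(\allocation^{b,*}) = 0$ to write
\[
  e^b_i(\allocation^b) = (\allocation^b_i - \allocation^{b,*}_i) - \tfrac12\big(\besta{i}{j}(\allocation^b) - \besta{i}{j}(\allocation^{b,*})\big) + \tfrac12\big(\besta{j}{i}(\allocation^b) - \besta{j}{i}(\allocation^{b,*})\big),
\]
and then observe that $\allocation^b \mapsto \besta{i}{j}(\allocation^b) = \max\{\, 0,\ \max_{k \in \neigh(i) \setminus j}(\weight_{i,k} - \allocation^b_k) \,\}$ is $1$-Lipschitz with respect to $\NormInf{\cdot}$ (and likewise $\besta{j}{i}$), so each bracketed increment is bounded in absolute value by $\NormInf{\allocation^b - \allocation^{b,*}} = |\allocation^b_i - \allocation^{b,*}_i|$. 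Thus $e^b_i(\allocation^b) = (\allocation^b_i - \allocation^{b,*}_i) + r$ with $|r| \le |\allocation^b_i - \allocation^{b,*}_i|$, which gives $(\allocation^b_i - \allocation^{b,*}_i)\, e^b_i(\allocation^b) \ge (\allocation^b_i - \allocation^{b,*}_i)^2 - |\allocation^b_i - \allocation^{b,*}_i|\, |r| \ge 0$.

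To close the argument I would combine this with the generalized gradient of $W$. Away from $\allocation^{b,*}$, $\partial W(\allocation^b) = \convexhull\{\, \operatorname{sgn}(\allocation^b_i - \allocation^{b,*}_i)\, h_i : i \text{ attains } \NormInf{\allocation^b - \allocation^{b,*}} \,\}$, with $h_i$ the $i$-th standard basis vector, while at $\allocation^b = \allocation^{b,*}$ one trivially has $\mathcal{L}_{-e^b}W(\allocation^{b,*}) = \{0\}$ since $e^b(\allocation^{b,*}) = 0$. Any $\zeta = \sum_i \eta_i\, \operatorname{sgn}(\allocation^b_i - \allocation^{b,*}_i)\, h_i \in \partial W(\allocation^b)$ (with $\eta \ge 0$, $\sum_i \eta_i = 1$) then satisfies $\zeta^T(-e^b(\allocation^b)) = -\sum_i \eta_i\, \operatorname{sgn}(\allocation^b_i - \allocation^{b,*}_i)\, e^b_i(\allocation^b) \le 0$ by the estimate above, so $\mathcal{L}_{-e^b}W(\allocation^b) \subseteq (-\infty, 0]$ everywhere. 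By the monotonicity criterion recalled in Section~\ref{sec:prelim}, $t \mapsto W(\allocation^b(t))$ is non-increasing, which finishes the proof.

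The step I expect to be the main obstacle — or at least the one requiring the right idea — is the choice of metric: the $\infty$-norm is the natural one here precisely because each coupling term $\besta{\cdot}{\cdot}$ enters $e^b_i$ with coefficient $\tfrac12$ and is $1$-Lipschitz in that norm, so at a maximal coordinate the perturbation terms are dominated by the contracting self term $-(\allocation^b_i - \allocation^{b,*}_i)$; a Euclidean Lyapunov candidate would run into the fact that the generalized Jacobian of $e^b$ need not be positive semidefinite once several agents share a best neighbor. The only other delicate point is the bookkeeping of generalized gradients on the measure-zero set where the $\nextbest{\cdot}{\cdot}$ sets fail to be singletons, which is handled exactly as in the proof of Proposition~\ref{prop:bounded}.
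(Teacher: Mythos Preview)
Your proof is correct and uses the same Lyapunov candidate as the paper (the paper takes $V=\tfrac{1}{2}\max_i(\tilde{\allocation}^b_i)^2$, which is just a monotone transformation of your $W=\NormInf{\allocation^b-\allocation^{b,*}}$), but the route to nonpositivity of the Lie derivative is genuinely different. The paper argues \emph{locally}: it invokes Lemma~\ref{lem:nbn} to ensure that, in a neighborhood of $\allocation^{b,*}$, any maximizers $\tau\in\nextbest{i}{j}(\allocation^b)$ and $\kappa\in\nextbest{j}{i}(\allocation^b)$ also lie in the corresponding sets at $\allocation^{b,*}$, which is what permits subtracting $e^b_i(\allocation^{b,*})=0$ termwise before applying $ab\le\tfrac{1}{2}(a^2+b^2)$. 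You sidestep this entirely by bounding the increments $\besta{i}{j}(\allocation^b)-\besta{i}{j}(\allocation^{b,*})$ directly via the $1$-Lipschitz property of $\besta{\cdot}{\cdot}$ in $\NormInf{\cdot}$, never needing to identify which neighbor realizes the max. This is cleaner, avoids the auxiliary lemma, and in fact proves more: your estimate holds for all $\allocation^b$, so the $\infty$-norm distance to every point of $\BB_\matching$ is \emph{globally} non-increasing, i.e., the dynamics is non-expansive relative to $\BB_\matching$, not merely locally stable there. One small remark: your closing comment about ``bookkeeping on the measure-zero set where $\nextbest{\cdot}{\cdot}$ fails to be a singleton'' is unnecessary in your own argument, since $e^b$ is continuous and the only nonsmooth object is $W$, whose generalized gradient you already handled.
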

\begin{proof}
  Take an arbitrary balanced allocation $\allocation^{b,*} \in
  \BB_\matching$ and consider the change of coordinates
  $\tilde{\allocation}^b = \allocation^b - \allocation^{b,*}$. Then
  \begin{align*}
    \dot{\tilde{\allocation}}^b &=
    -e^b(\tilde{\allocation}^b+\allocation^{b,*}).
  \end{align*}
  For brevity, denote this dynamics $\tilde{F} : \reals^n \rightarrow
  \reals^n$. We compute the Lie derivative of
  \begin{align*}
    V(\tilde{\allocation}^b) = \frac{1}{2}\max_{i \in
      \vertices}(\tilde{\allocation}^b_i)^2,
  \end{align*}
  along $\tilde{F}$. The derivation is very similar the one used in
  the proof of Proposition~\ref{prop:bounded},
  \begin{align*}
    \mathcal{L}_{\tilde{F}}V(\tilde{\allocation}^b) = \bigg \{a \in
    \reals &: a = - \hspace{-3mm} \sum_{i \in
      \mathbbm{M}(\tilde{\allocation}^b)} \hspace{-3mm} \lambda_i
    \tilde{\allocation}^b_i e_i^b(\tilde{\allocation}^b +
    \tilde{\allocation}^{b,*}),
    \\
    & \quad \text{for all $\lambda \in \reals^n_{\ge0}$
      s.t. \hspace{-3mm} $\sum_{i \in
        \mathbbm{M}(\tilde{\allocation}^b)} \hspace{-3mm} \lambda_i =
      1$} \bigg\},
  \end{align*}
  where
  \begin{align*}
    \mathbbm{M}(\tilde{\allocation}^b) := \frac{1}{2} \argmax_{i \in
      \vertices} (\tilde{\allocation}^b_i)^2.
  \end{align*}
  Consider one of the specific summands $-\tilde{\allocation}^b_i
  e_i^b(\tilde{\allocation}^b+\allocation^{b,*})$ for some $i \in
  \mathbbm{M}(\tilde{\allocation}^b)$. For $(i,j) \in \matching$, take
  $\tau \in \nextbest{i}{j}(\tilde{\allocation}^b +
  \allocation^{b,*})$ and $\kappa \in
  \nextbest{j}{i}(\tilde{\allocation}^b + \allocation^{b,*})$ so that
  we can write,  
  \begin{align*} -\tilde{\allocation}^b_i
      e_i^b(\tilde{\allocation}^b + \tilde{\allocation}^{b,*}) &=
      -\tilde{\allocation}^b_i (\tilde{\allocation}^b_i +
      \allocation^{b,*}_{i} - \tfrac{1}{2}(\weight_{i,j} + w_{i,\tau}
      - \tilde{\allocation}^b_{\tau}
      \\
      & \hspace{12mm} - \allocation^{b,*}_{\tau} - w_{j,\kappa} +
      \tilde{\allocation}^b_{\kappa} + \allocation^{b,*}_{\kappa}) ).
  \end{align*}
  Now, according to Lemma~\ref{lem:nbn}, there exists $\epsilon > 0$
  such that, for all $(k,l) \in \edges$, we have
  \begin{align*}
    \nextbest{k}{l}(\tilde{\allocation}^b+\allocation^{b,*}) & =
    \nextbest{k}{l}(\allocation^b) \subseteq
    \nextbest{k}{l}(\allocation^{b,*}) ,
  \end{align*}
  for all $\allocation^b$ such that $\|\tilde{\allocation}^b\| =
  \|\allocation^b - \allocation^{b,*}\| < \epsilon$. Therefore, for
  such allocations, we have $\tau \in
  \nextbest{i}{j}(\allocation^{b,*})$ and $\kappa \in
  \nextbest{j}{i}(\allocation^{b,*})$, and hence
  \begin{align*}
    -&\tilde{\allocation}^b_i e_i^b(\tilde{\allocation}^b +
    \tilde{\allocation}^{b,*})
    \\
    &= -\tilde{\allocation}^b_i(\tilde{\allocation}^b_i + \tfrac{1}{2}
    \tilde{\allocation}^b_{\tau} - \tfrac{1}{2}
    \tilde{\allocation}^b_{\kappa} + e^b_i(\allocation^{b,*}))
    \\
    &= -(\tilde{\allocation}^b_i)^2 - \tfrac{1}{2}
    \tilde{\allocation}^b_i\tilde{\allocation}^b_{\tau} + \tfrac{1}{2}
    \tilde{\allocation}^b_i\tilde{\allocation}^b_{\kappa}
    \\
    &\le -(\tilde{\allocation}^b_i)^2 + \tfrac{1}{4}
    (\tilde{\allocation}^b_i)^2 +
    \tfrac{1}{4}(\tilde{\allocation}^b_{\tau})^2 + \tfrac{1}{4}
    (\tilde{\allocation}^b_i)^2 +
    \tfrac{1}{4}(\tilde{\allocation}^b_{\kappa})^2 \le 0,
  \end{align*}
  where we have used the fact that $\allocation^{b,*} \in
  \BB_\matching$ in the second equality, the inequality $ab \le
  \tfrac{1}{2}a^2 + \tfrac{1}{2}b^2$ for $a,b \in \reals$ in the first
  inequality and the fact that $i \in
  \mathbbm{M}(\tilde{\allocation}^b)$ in the last inequality. Thus $a
  \le 0$ for each $a \in
  \mathcal{L}_{\tilde{F}}\tilde{V}(\tilde{\allocation}^b)$ when
  $\|\tilde{\allocation}^b\| \le \epsilon$, which means that
  $\tilde{\allocation}^b = 0$ is locally stable. In the original
  coordinates, $\allocation^b = \allocation^{b,*}$ is locally
  stable. Since $\allocation^{b,*}$ is arbitrary, we deduce that every
  allocation in $\BB_\matching$ is locally stable.
\end{proof}

The boundedness of the balancing errors together with the local
stability of the balanced allocations under the dynamics allow us to
employ the LaSalle Invariance Principle, cf. Theorem~\ref{th:lasalle}
in the proof of the next result and establish the pointwise
convergence of the dynamics to an allocation in a balanced outcome
with matching~$\matching$.

\begin{proposition}\longthmtitle{Convergence to a balanced
    outcome}\label{prop:balanced}
  Given a matching $\matching$, let $t \rightarrow \allocation^b(t)$
  be a trajectory of~\eqref{eq:stable_dyn} starting from an initial
  point in $\reals^n$. Then $t \mapsto (\matching,\allocation^b(t))$
  converges to a balanced outcome.  Moreover, the
  dynamics~\eqref{eq:balance_dyn} is distributed with respect to
  $2$-hop neighborhoods over $\graph$.
\end{proposition}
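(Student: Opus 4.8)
The plan is to invoke the set-valued LaSalle Invariance Principle (Theorem~\ref{th:lasalle}), applied not directly to~\eqref{eq:balance_dyn} but to the augmented dynamics~\eqref{eq:xi-omega-dyn} introduced in the proof of Proposition~\ref{prop:bounded}, whose trajectories contain the curve $t \mapsto (e^b(\allocation^b(t)),\allocation^b(t))$. First I would record the ingredients already in hand: Proposition~\ref{prop:bounded} shows $V(\xi)=\max_i \tfrac12\xi_i^2$ is non-increasing along~\eqref{eq:xi-omega-dyn} and that $t\mapsto e^b(\allocation^b(t))$ stays in a bounded (sublevel) set; and $\mathcal{F}[F]$ is locally bounded, upper semi-continuous, nonempty/convex/compact-valued by the discussion preceding~\eqref{eq:filippov-set-valued}. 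I would also need that the full trajectory $(\xi(t),\omega(t))$ is bounded: the $\xi$-component is bounded by Proposition~\ref{prop:bounded}, and since $\dot\omega_i=-\xi_i$ is bounded, $\omega(t)=\allocation^b(t)$ cannot blow up in finite time, so trajectories are complete and bounded on compact time intervals; one then argues (using that the balancing errors converge and an unmatched coordinate satisfies $\dot\alpha^b_k=-\alpha^b_k$, which is exponentially decaying) that $\allocation^b(t)$ itself remains bounded, hence LaSalle applies on a bounded invariant region.

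Next I would characterize the largest weakly positively invariant set $\mathcal{M}$ inside $\overline{\{(\xi,\omega): 0\in\mathcal{L}_{\mathcal{F}[F]}V(\xi)\}}$. From the bound computation in Proposition~\ref{prop:bounded}, an element $a\in\mathcal{L}_{\mathcal{F}[F]}V(\xi)$ equals $0$ only when, for each $i\in\maxerrors(\xi)$, all the terms $-\xi_i^2$, $-\tfrac12\lambda^i_\tau\xi_i\xi_\tau$, $+\tfrac12\mu^i_\kappa\xi_i\xi_\kappa$ sum to zero, and since $-\xi_i^2+\tfrac12\xi_i^2+\tfrac12\xi_i^2=0$ is the extreme case, this forces $\xi_i^2=\xi_\tau^2=\xi_\kappa^2$ for the relevant neighbors and ultimately $\xi_i=0$ — i.e. $V(\xi)=0$, which gives $e^b(\allocation^b)=0$. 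By construction (the remark right after~\eqref{eq:balance_dyn}), $e^b(\allocation^b)=0$ means $\allocation^b\in\BB_\matching$, i.e. $(M,\allocation^b)$ is a balanced outcome. Hence every trajectory converges to the set $\{0\}\times\BB_\matching$, so $t\mapsto(\matching,\allocation^b(t))$ approaches a balanced outcome.

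To upgrade set-convergence to \emph{pointwise} convergence to a single balanced allocation, I would use Proposition~\ref{prop:local_stability}: the $\omega$-limit set of the bounded trajectory is a nonempty, compact, connected, weakly invariant subset of $\BB_\matching$; pick any point $\allocation^{b,*}$ in it. Since $\allocation^{b,*}$ is Lyapunov stable for~\eqref{eq:balance_dyn} (Proposition~\ref{prop:local_stability}) and is also an $\omega$-limit point, the trajectory enters every neighborhood of $\allocation^{b,*}$ and, by stability, cannot leave a prescribed neighborhood thereafter; a standard argument then forces the $\omega$-limit set to be the singleton $\{\allocation^{b,*}\}$, giving $\lim_{t\to\infty}\allocation^b(t)=\allocation^{b,*}$. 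Finally, the distributed claim is immediate from the observation following~\eqref{eq:balance_dyn}: agent $i$'s update $\dot\alpha^b_i=-e^b_i(\allocation^b)$ only needs $\weight_{i,j}$, $\besta{i}{j}$ (computed from $1$-hop data of $i$), and $\besta{j}{i}$ (computed from $1$-hop data of $j$, hence $2$-hop data of $i$), so the dynamics is $2$-hop distributed over $\graph$.

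The main obstacle I anticipate is the careful bookkeeping in two places: (a) verifying the hypotheses of LaSalle rigorously for the augmented system — in particular confirming boundedness/completeness of $(\xi,\omega)$ and that the relevant set is closed so that convergence to the \emph{closure} already lands in $\{0\}\times\BB_\matching$; and (b) the equality-case analysis of the Lie-derivative bound, i.e. showing that $a=0$ truly forces $\xi_i=0$ for $i\in\maxerrors(\xi)$ rather than merely bounding $a\le 0$ — this requires tracking when each Young-type inequality $|\xi_i\xi_\tau|\le\tfrac12\xi_i^2+\tfrac12\xi_\tau^2$ is tight and propagating the resulting equalities through the best-neighbor sets. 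The passage from set-convergence to pointwise convergence via local stability is the other delicate point, but Proposition~\ref{prop:local_stability} is tailored precisely for it.
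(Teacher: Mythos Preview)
Your overall strategy---augmented system, LaSalle, then pointwise convergence via Proposition~\ref{prop:local_stability}---matches the paper's architecture, but there is a genuine gap at the heart of the argument: your plan to apply LaSalle with $V(\xi)=\tfrac12\max_i\xi_i^2$ alone does not go through. First, a technical point: Theorem~\ref{th:lasalle} as stated requires a differentiable $V$, and the max function is not. More substantively, your equality-case claim is incorrect. When $a=0$ in~\eqref{eq:lie-V}, tightness of the Young inequalities only yields $\xi_\tau^2=\xi_\kappa^2=\xi_i^2$ for the relevant best neighbors, which merely enlarges $\maxerrors(\xi)$; it does \emph{not} force $\xi_i=0$. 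One can have $\xi\neq 0$ with $0\in\mathcal{L}_{\mathcal{F}[F]}V(\xi)$, so the zero-Lie-derivative set strictly contains $\{\xi=0\}$, and ``propagating equalities'' does not terminate without further structure.

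The missing ingredient is a second, differentiable Lyapunov function. The paper observes that for matched $(i,j)$ one has $\dot\xi_i+\dot\xi_j=-(\xi_i+\xi_j)$ independently of $\omega$, and takes
\[
\widetilde V(\xi)=\tfrac12\sum_{(i,j)\in\matching}(\xi_i+\xi_j)^2+\tfrac12\sum_{\text{$k$ unmatched}}\xi_k^2,
\]
which is smooth and whose Lie derivative is $-2\widetilde V(\xi)$. LaSalle with $\widetilde V$ then restricts attention to the set $L=\{\xi:\xi_i=-\xi_j\text{ for }(i,j)\in\matching,\ \xi_k=0\text{ otherwise}\}$. Only \emph{after} this reduction does the paper use the monotonicity of your $V$ (from Proposition~\ref{prop:bounded}) together with a case-by-case analysis---exploiting that on $L$ one has $\dot\xi_i=-\dot\xi_j$, hence both must vanish for $i\in\maxerrors(\xi)$, and then ruling out each configuration of $\nextbest{i}{j}(\omega)$ and $\nextbest{j}{i}(\omega)$, ultimately using the $\omega$-dynamics to exclude the persistent case---to conclude $\mathcal{M}=\{0\}$. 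Your proposal collapses these two steps into one, and that is where it breaks. The pointwise-convergence and $2$-hop-distributed parts of your sketch are fine and match the paper.
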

\begin{proof}
  Note that, for each pair of matched agents $(i,j) \in \matching$,
  the sum $\dot{\allocation}^b_i + \dot{\allocation}^b_j =
  \weight_{i,j} - (\allocation^b_i + \allocation^b_j)$, implying that
  $\allocation^b_i(t) + \allocation^b_j(t) \rightarrow \weight_{i,j}$
  exponentially fast. For each unmatched agent $k$, one has that
  $\dot{\allocation}^b_k = -\allocation^b_k$, implying that
  $\allocation^b_i(t) \rightarrow 0$ exponentially fast. Therefore, it
  follows that $t \mapsto (\matching,\allocation^b(t))$ converges to
  the set of (valid) outcomes. It remains to further show that it
  converges to the set of balanced outcomes. Following the approach
  employed in the proof of Proposition~\ref{prop:bounded}, we argue
  with the trajectories of~\eqref{eq:xi-omega-dyn}, which we showed
  contain the trajectory $t \mapsto e^b(\allocation^b(t))$. For
  matched agents $(i,j) \in \matching$,
  \setcounter{equation}{17}
  \begin{align}\label{eq:sum-xi}
    \dot{\xi}_i + \dot{\xi}_j = - (\xi_i + \xi_j),
  \end{align}
  under the dynamics~\eqref{eq:xi-omega-dyn}. Interestingly, this
  dynamics is independent of $\omega$. Thus, using the Lyapunov
  function
  \begin{align*}
    \widetilde{V}(\xi) = \tfrac{1}{2}\sum_{(i,j) \in \matching}(\xi_i
    + \xi_j)^2 + \tfrac{1}{2}\hspace{-4mm}\sum_{\substack{\{i \in
        \vertices :\\ \text{$i$ is
          unmatched}\}}}\hspace{-4mm}(\xi_i)^2,
  \end{align*}
  it is trivial to see that
  \begin{align*}
    \mathcal{L}_{\mathcal{F}[F]}\widetilde{V}(\xi) = -\sum_{(i,j)
      \in \matching}(\xi_i + \xi_j)^2 -
    \hspace{-4mm}\sum_{\substack{\{i \in \vertices :\\ \text{$i$ is
          unmatched}\}}}\hspace{-4mm}(\xi_i)^2,
  \end{align*}
  which, again, is independent of $\omega$. By the boundedness of $t
  \mapsto \xi(t)$ established in Proposition~\ref{prop:bounded}, and
  using $\widetilde{V}$, we are now able to apply the LaSalle
  Invariance Principle, cf. Theorem~\ref{th:lasalle}, which asserts
  that the trajectory $t \mapsto \xi(t)$ converges to the largest
  weakly positively invariant set $\mathcal{M}$ contained in
  \begin{align*}
    L :&= \{ \xi \in \reals^n :
    \mathcal{L}_{\mathcal{F}[F]}\widetilde{V}(\xi) = 0 \} \\ &= \{
    \xi \in \reals^n : \xi_i = -\xi_j, \; \forall (i,j) \in \matching,
    \\ &\hspace{21.5mm} \text{and } \xi_i = 0 \text{ if $i$ is
      unmatched} \} .
  \end{align*}
  Incidentally, this set is closed already which is why we omit the
  closure operator. We next show, using the fact that $t \mapsto
  V(\xi(t))$ is non-increasing (cf. Proposition~\ref{prop:bounded})
  and the weak invariance of $\mathcal{M}$, that in fact $\mathcal{M}
  = \{0\}$.  Take a point $\xi \in \mathcal{M} \subseteq L$ and take
  an $i \in \maxerrors(\xi)$. If $i$ is unmatched, then $\xi_i = 0$
  already and the proof would be complete. So, assume $(i,j) \in
  \matching$ for some $j \in \vertices$. Then, $\xi_j = -\xi_i$ and it
  also holds that $\dot{\xi}_i = -\dot{\xi}_j$ (see
  e.g.,~\eqref{eq:sum-xi}). In fact, it must be that $\dot{\xi}_i =
  \dot{\xi}_j = 0$, otherwise one of $\xi_i$ or $\xi_j$ would be
  increasing, which would contradict $t \mapsto V(\xi(t))$ being
  non-increasing. If $\nextbest{i}{j}(\omega) =
  \nextbest{j}{i}(\omega) = \emptyset$ then $0 = \dot{\xi}_i = -\xi_i
  = \xi_j$, which would complete the proof. Suppose then that $\tau =
  \nextbest{i}{j}(\omega)$ and $\nextbest{j}{i}(\omega) =
  \emptyset$. Then $0 = \dot{\xi}_i = -\xi_i +
  \tfrac{1}{2}\xi_{\tau}$, which contradicts $i \in \maxerrors(\xi)$
  (unless of course $\xi_i = 0$, which would complete the proof). A
  similar argument holds if $\nextbest{i}{j}(\omega) = \emptyset$ and
  $\nextbest{j}{i}(\omega) = \kappa$. The final case is if
  $\nextbest{i}{j}(\omega) = \tau$ and $\nextbest{j}{i}(\omega) =
  \kappa$. In this case, $0 = \dot{\xi}_i = -\xi_i +
  \tfrac{1}{2}\xi_{\tau} - \tfrac{1}{2}\xi_{\kappa}$. So as not to
  contradict $i \in \maxerrors(\xi)$, it must be that $\xi_i =
  -\xi_{\tau} = \xi_{\kappa}$, which means that $\tau,\kappa \in
  \maxerrors(\xi)$ as well. Therefore, using the same argument we used
  for $i$, it must be that $0 = \dot{\xi}_{\tau} =
  \dot{\xi}_{\kappa}$. Assume without loss of generality that
  $\xi_{\tau}$ is strictly negative (if it were zero the proof would
  be complete and if it were positive we could argue instead with
  $\xi_{\kappa}$). This means that $\xi_{\tau}$ grows larger at a
  constant rate since $\dot{\omega}_{\tau} = -\xi_{\tau}$. At some
  time, it would happen that $\omega_{\tau} > w_{i,\tau}$, which would
  make $\nextbest{i}{j}(\omega) = \emptyset$. This corresponds to a
  case we previously considered where we showed that, so as not to
  contradict the monotonicity of $t \mapsto V(\xi(t))$ it must be that
  $\xi_i = 0$. In summary, $\mathcal{M} = \{0\} \subset \reals^n$, so
  $\xi(t) \rightarrow 0$. By construction of the
  dynamics~\eqref{eq:xi-omega-dyn} it follows that
  $e^b(\allocation^b(t)) \rightarrow 0$ which means, by construction
  of $e^b$, that $(M,\allocation^b(t))$ converges to the set of
  balanced outcomes. This, along with the local stability of each
  balanced allocation (cf. Proposition~\ref{prop:local_stability}) is
  sufficient to ensure pointwise convergence to a balanced
  outcome~\cite[Proposition 2.2]{QH-WMH:09}. Finally, it is clear
  from~\eqref{eq:balance_dyn} that the dynamics is distributed with
  respect to 2-hop neighborhoods, which completes the proof.
\end{proof}

\myclearpage
\section{Distributed dynamics to find Nash outcomes}\label{sec:nash}

In this section, we combine the previous developments to propose
distributed dynamics that converge to Nash outcomes. The design of
this dynamics is inspired by the following result
from~\cite{YA-BB-LC-ND-YP:09} revealing that balanced outcomes
associated with maximum weight matchings are stable.

\begin{proposition}\longthmtitle{Balanced implies
    stable}\label{prop:quasi} 
  Let $\matching$ be a maximum weight matching on $\graph$ and suppose
  that $\graph$ admits a stable outcome. Then, a balanced outcome of
  the form $(\matching,\allocation^b)$ is also stable, and thus Nash.
\end{proposition}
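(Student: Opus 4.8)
The plan is to recast stability in terms of the common ``benefit'' of each matched pair and then rule out a positive benefit by a weight‑augmenting argument. Since $\graph$ admits a stable outcome and $\matching$ is the (unique) maximum weight matching, Lemma~\ref{lem:exist} supplies an allocation $\allocation^s$ for which $(\matching,\allocation^s)$ is a stable outcome; in particular $\allocation^s$ is a valid allocation on $\matching$ and obeys $\allocation^s_u+\allocation^s_v\ge \weight_{u,v}$ for every $(u,v)\in\edges$. For $(i,j)\in\matching$ set $\gamma_{ij}:=\besta{i}{j}(\allocation^b)-\allocation^b_i$, which by the balance condition equals $\besta{j}{i}(\allocation^b)-\allocation^b_j$. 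I would first record the elementary equivalence: $(\matching,\allocation^b)$ is stable iff $\gamma_{ij}\le 0$ for all $(i,j)\in\matching$. Indeed, $\gamma_{ij}\le 0$ means $\weight_{i,k}-\allocation^b_k\le\besta{i}{j}(\allocation^b)\le\allocation^b_i$ for every matched $i$ (with partner $j$) and every $k\in\neigh(i)\setminus j$, which yields $\allocation^b_i\ge\besta{i}{j}(\allocation^b)\ge 0$ and all edge inequalities having a matched endpoint; an edge between two unmatched agents has weight $0$ because $\matching$ is maximum weight, so it causes no trouble. The converse is immediate. Hence it suffices to show that $\Gamma:=\max_{(i,j)\in\matching}\gamma_{ij}>0$ leads to a contradiction.

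So assume $\Gamma>0$, and call $(i,j)\in\matching$ \emph{tight} if $\gamma_{ij}=\Gamma$ and an edge $(u,v)$ \emph{tight} if $\weight_{u,v}=\allocation^b_u+\allocation^b_v+\Gamma$. Repeating the estimate above with $\Gamma$ in place of $0$ shows every edge satisfies $\weight_{u,v}\le\allocation^b_u+\allocation^b_v+\Gamma$, so a tight edge is never a matching edge; and a short computation shows tightness propagates along best neighbours: if $(i,j)\in\matching$ is tight and $\tau\in\nextbest{i}{j}(\allocation^b)$, then the edge $(i,\tau)$ is tight and, if $\tau$ is matched with partner $\tau'$, the pair $(\tau,\tau')$ is tight as well. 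Starting from a tight pair and alternately following a tight non‑matching edge out of a vertex to one of its best neighbours and then the (unique) matching edge, I would grow alternating walks that, by this propagation, stay among tight pairs and tight edges until they either close into an alternating cycle, land on an unmatched agent, or two of them (grown from the two endpoints of the initial tight pair) meet. Writing $d:=\allocation^b-\allocation^s$, which vanishes on unmatched agents and satisfies $d_u+d_v=0$ on every $(u,v)\in\matching$, I would then argue: a genuine alternating cycle, or a genuine augmenting path between two distinct unmatched agents, gives (via the symmetric difference with $\matching$) a matching of strictly larger weight --- each tight non‑matching edge along it contributes a net $+\Gamma$ while the matching edges contribute $0$ --- contradicting maximality of $\matching$; and in the remaining degenerate configurations (the walk is forced onto an unmatched agent, or two walks collide) one assembles, possibly after extending the walks, a closed walk $W$ in which every vertex has even degree and each matched pair contributes equally to the degree count, so that summing the stable‑outcome relations $d_u+d_v\le\allocation^b_u+\allocation^b_v-\weight_{u,v}$ over the edges of $W$ yields, telescoping on the left and using the structure of $d$, the impossibility $0=\sum_{(u,v)\in W}(d_u+d_v)\le-\big(\#\text{tight edges of }W\big)\Gamma<0$.

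Hence $\Gamma\le 0$, which by the equivalence above means $(\matching,\allocation^b)$ is stable; being balanced as well, it is Nash. The main obstacle is the combinatorial bookkeeping in the last step: cataloguing how the alternating walks can interact and checking that every configuration produces one of the two contradiction patterns, with the appropriate closed walk in hand. It is precisely in the degenerate ``collision'' cases that one uses the full hypothesis that a stable outcome exists --- i.e.\ the allocation $\allocation^s$ --- rather than merely that $\matching$ has maximum weight; without it, e.g.\ for the triangle with edge weights $10,6,6$, a balanced outcome on the maximum weight matching need not be stable.
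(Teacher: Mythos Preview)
The paper does not supply a proof of Proposition~\ref{prop:quasi}; the result is quoted from~\cite{YA-BB-LC-ND-YP:09} without argument, so there is nothing in the paper to compare your proposal against.  As an independent attempt, your outline is sound and follows the standard route: the reduction of stability to $\Gamma\le 0$, the edge bound $w_{u,v}\le\allocation^b_u+\allocation^b_v+\Gamma$, and the tightness--propagation step (tight pair $\Rightarrow$ tight non-matching edge $\Rightarrow$ tight pair at the far endpoint) are all correct, and the augmenting path/cycle contradiction in the ``genuine'' cases is the right mechanism.  Your triangle example also correctly isolates why the stable-outcome hypothesis is essential rather than cosmetic.

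Where the sketch is incomplete---and you flag this yourself---is the construction of the closed walk $W$ in the degenerate cases.  The degree condition you need (matched partners $u,u'$ have $\deg_W(u)=\deg_W(u')$) is automatic for a \emph{properly} alternating closed walk: each visit to a matched vertex contributes one $M$-edge and one $N$-edge, and all $M$-edges at $u$ are the single edge $(u,u')$, so the multiplicity of $(u,u')$ in $W$ equals $\tfrac{1}{2}\deg_W(u)=\tfrac{1}{2}\deg_W(u')$.  But the cycle extracted from the first revisit need not be properly alternating.  If the revisited vertex $u_m$ appears with opposite parity at its two occurrences, both cycle-edges at $u_m$ are non-matching; then $\deg_W(u_m)=2$ while its partner $u_{m-1}$ lies outside the cycle and has $\deg_W(u_{m-1})=0$, so the telescoping identity fails.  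Padding by the matching edge does not repair this.  Your phrase ``possibly after extending the walks'' therefore hides the real work: one must argue, for instance, that the infinite alternating walk from a tight pair is eventually periodic and that its period is properly alternating, or else combine the walks from $i$ and from $j$ more carefully so that the parity obstruction cancels.  This can be carried out, and your device $d=\allocation^b-\allocation^s$ is exactly the right tool once such a $W$ is in hand, but the bookkeeping you defer is where the proof actually lives.
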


In a nutshell, our proposed dynamics combine the fact that (i) the
distributed dynamics~\eqref{eq:stable_dyn} of Section~\ref{sec:stable}
allow agents to determine a maximum weight matching and (ii) given
such a maximum weight matching, the distributed
dynamics~\eqref{eq:balance_dyn} of Section~\ref{sec:balance} converge
to balanced outcomes. The combination of these facts with
Proposition~\ref{prop:quasi} yields the desired convergence to Nash
outcomes.  

When putting the two dynamics together, however, one should note that
the convergence of~\eqref{eq:stable_dyn} is asymptotic, and hence
agents implement~\eqref{eq:balance_dyn} before the final stable
matching is realized. To do this, we have agents guess with whom (if
any) they will be matched in the final Nash outcome. An agent $i$
guesses that it will match with $j \in \neigh(i)$ if the current value
of the matching state $m_{i,j}(t)$ coming from the
dynamics~\eqref{eq:stable_dyn} is closest to $1$ as compared to all
other neighbors in $\neigh(i) \setminus j$. As we show later, this
guess becomes correct in finite time. Formally, agent $i$ predicts its
\emph{partner} by computing
\begin{align*}
  \partner_i(m) \! = \! \{ j \in \neigh(i) \! : \! |m_{i,j}-1| \! < \!
  |m_{i,k}-1|, \forall k \in \neigh(i) \! \setminus \! j\}.
\end{align*}
Clearly, $\partner_i(m)$ is at most a singleton and can be computed
by~$i$ using local information. If $\partner_i(m) = \{j\}$, we use the
slight abuse of notation and write $\partner_i(m) = j$. 

With the above discussion in mind, we next propose the following
distributed strategy: each agent $i \in \vertices$ implements its
corresponding dynamics in~\eqref{eq:stable_dyn} to find a stable
outcome but only begins balancing its allocation if, for some $j \in
\neigh(i)$, agents $i$ and $j$ identify each other as
partners. Formally, this dynamics is represented by, for each $i \in
\vertices$,
%
\begin{subequations}\label{eq:nash_dyn}
  \begin{align}
    \dot{\allocation}^s_i &= \left\{
      \begin{alignedat}{2}
        &\nomflow^{\allocation}_i(\allocation^s,\slack,m), && \;
        \allocation^s_i > 0,
        \\
        &\max\{0,\nomflow^{\allocation}_i(\allocation^s,\slack,m)\},
        && \; \allocation^s_i = 0,
      \end{alignedat} \right. \label{eq:nash_stable1}
    \\
    \dot{\allocation}^b_i &= \left\{ \begin{alignedat}{2}
        &-e^b_i(\allocation^b), &&\; \; \text{if for some } j \in
        \neigh(i),
        \\
        & && \; \; \partner_i(m) = j \text{ and } \partner_j(m) = i,
        \\
        &-\allocation^b_i, && \; \; \text{otherwise,}
      \end{alignedat} \right. \label{eq:nash_balance}
  \end{align}
  and, for each $j \in \neigh^+(i)$,
  \begin{align}
    \dot{\slack}_{i,j} &= \left \{ \begin{alignedat}{2}
        &\nomflow^{\slack}_{i,j}(\allocation^s,\slack,m), && \;
        \slack_{i,j} > 0,
        \\
        &\max\{0,\nomflow^{\slack}_{i,j}(\allocation^s,\slack,m)\}, &&
        \; \slack_{i,j} = 0,
      \end{alignedat} \right. \label{eq:nash_stable2}
    \\
    \dot{m}_{i,j} &= \allocation^s_i + \allocation^s_j - \slack_{i,j}
    - \weight_{i,j}. \label{eq:nash_stable3}
  \end{align}
\end{subequations}
The state of agent $i \in \vertices$ is then
\begin{align*}
  (\allocation^s_i,\allocation^b_i,\{s_{i,j}&\}_{j \in
    \neigh^+(i)},\{m_{i,j}\}_{j \in \neigh^+(i)})
  \\
  &\in \realsnonnegative \times \reals \times
  \reals^{|\neigh^+(i)|}_{\ge 0} \times \reals^{|\neigh^+(i)|}.
\end{align*}
For convenience, we denote the dynamics~\eqref{eq:nash_dyn} by
\begin{align*}
  F^{\operatorname{Nash}} \! : \! \reals^n_{\ge 0} \! \times \!
  \reals^n \! \times \!  \reals^{|\edges|}_{\ge 0} \! \times \!
  \reals^{|\edges|} \rightarrow \reals^n_{\ge 0} \! \times \reals^n \!
  \times \! \reals^{|\edges|}_{\ge 0} \!  \times \! \reals^{|\edges|}.
\end{align*}
The dynamics~\eqref{eq:nash_dyn} can be viewed as a cascade system,
with the states $m$ feeding into the balancing
dynamics~\eqref{eq:nash_balance}. The next result establishes the
asymptotic convergence of this cascade system.

\begin{theorem}\longthmtitle{Asymptotic convergence to Nash
    outcomes}\label{th:nash} Let $t \rightarrow
  (\allocation^s(t),\allocation^b(t),\slack(t),m(t))$ be a trajectory
  of~\eqref{eq:nash_dyn} starting from an initial point in
  $\reals^n_{\ge 0} \times \reals^n \times
  \realsnonnegative^{|\edges|} \times \reals^{|\edges|}$. Then, if
  there exists a stable outcome, for some $T > 0$ the maximum weight
  matching $\matching$ is well-defined by the implication
  \begin{align*}
    (i,j) \in \matching \quad \Leftrightarrow \quad 
    \partner_i(m(t)) = j \text{ and } \partner_j(m(t)) = i.
  \end{align*}
  for all $t \ge T$. Furthermore, $t \mapsto (M,\allocation^{b}(t))$
  converges to a Nash outcome. Moreover,~\eqref{eq:nash_dyn} is
  distributed with respect to $2$-hop neighborhoods over $\graph$.
\end{theorem}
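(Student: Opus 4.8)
The plan is to exploit the cascade structure of~\eqref{eq:nash_dyn}: the variables $(\allocation^s,\slack,m)$ evolve according to~\eqref{eq:nash_stable1},~\eqref{eq:nash_stable2}, and~\eqref{eq:nash_stable3}, which are precisely the stable-outcome dynamics~\eqref{eq:stable_dyn} and do not depend on $\allocation^b$. So first I would invoke Proposition~\ref{prop:stab}: since a stable outcome exists and, by the standing assumption, the maximum weight matching $M$ is unique, the trajectory $(\allocation^s(t),\slack(t),m(t))$ converges to $(\allocation^{s,*},\slack^*,m^*)$, where $m^*\in\{0,1\}^{|\edges|}$ is the indicator of $M$ and $(M,\allocation^{s,*})$ is a stable outcome.

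Next I would show that the partner predictions become correct in finite time. For each matched pair $(i,j)\in M$ we have $m^*_{i,j}=1$ while $m^*_{i,k}=0$ for every other $k\in\neigh(i)$; hence $|m_{i,j}(t)-1|\to 0$ while $|m_{i,k}(t)-1|\to 1$, so there is a time after which $\partner_i(m(t))=j$, and symmetrically $\partner_j(m(t))=i$. Taking the maximum of these thresholds over the finitely many edges of $M$ produces a single $T>0$ such that $(i,j)\in M$ implies the reciprocal-partner condition for all $t\ge T$. For the converse, suppose $\partner_i(m(t))=j$ and $\partner_j(m(t))=i$ for all $t\ge T$ but $(i,j)\notin M$: if one of the two agents is matched in $M$ to a different agent, the $\Rightarrow$ direction already contradicts the prediction; and if both $i$ and $j$ are unmatched in $M$, then $\weight_{i,j}=0$ (because $M$ has maximum weight and $\weights\ge 0$), so $M\cup\{(i,j)\}$ would be another maximum weight matching, contradicting uniqueness. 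This establishes the claimed characterization of $M$ for all $t\ge T$.

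With this in hand, I would observe that for $t\ge T$ the balancing dynamics~\eqref{eq:nash_balance} coincides exactly with~\eqref{eq:balance_dyn} for the fixed matching $M$: each matched agent $i$ uses $-e^b_i(\allocation^b)$ with its true $M$-partner, and each unmatched agent $k$ falls in the ``otherwise'' branch $\dot{\allocation}^b_k=-\allocation^b_k=-e^b_k(\allocation^b)$. Thus the restriction of the trajectory to $[T,\infty)$ is a solution of~\eqref{eq:balance_dyn} with matching $M$ and initial condition $\allocation^b(T)\in\reals^n$, so Proposition~\ref{prop:balanced} applies and $t\mapsto(M,\allocation^b(t))$ converges pointwise to a balanced outcome $(M,\allocation^{b,*})$. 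Since $M$ is a maximum weight matching and $\graph$ admits a stable outcome, Proposition~\ref{prop:quasi} guarantees that $(M,\allocation^{b,*})$ is also stable, hence Nash, which is the desired convergence. Finally, the distributedness claim follows by collecting the information requirements of each piece: the $(\allocation^s,\slack,m)$ subsystem is $1$-hop distributed (Proposition~\ref{prop:stab}), $\partner_i$ and the reciprocity test use only $\{m_{i,k}\}_{k\in\neigh(i)}$ and the analogous quantity reported by $j$ (still $1$-hop), and the balancing term $e^b_i$ requires $\besta{j}{i}$, i.e.\ $2$-hop information, exactly as in Proposition~\ref{prop:balanced}.

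I expect the main obstacle to be the finite-time correctness of the partner prediction, specifically ruling out spurious reciprocal guesses between agents that are unmatched in $M$; handling this cleanly relies on uniqueness of the maximum weight matching together with nonnegativity of the weights. A secondary point requiring care is checking that, once the predictions have stabilized, the $\allocation^b$-trajectory on $[T,\infty)$ is genuinely a trajectory of~\eqref{eq:balance_dyn}, so that Proposition~\ref{prop:balanced} can be invoked verbatim, and that the finitely many edge-wise threshold times can be amalgamated into a single $T$.
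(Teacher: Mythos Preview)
Your proposal is correct and follows the same cascade argument as the paper: Proposition~\ref{prop:stab} gives $m(t)\to m^*\in\{0,1\}^{|\edges|}$, whence the partner predictions stabilize in finite time, after which Propositions~\ref{prop:balanced} and~\ref{prop:quasi} yield convergence to a Nash outcome. The paper handles the finite-time step slightly more directly by choosing $\epsilon<\tfrac12$ so that $|m_{i,j}(t)-1|<\tfrac12$ exactly when $m^*_{i,j}=1$, which gives both directions of the equivalence at once; your case analysis for the converse via uniqueness of~$M$ is equally valid and in fact makes explicit the point (left implicit in the paper) that no edge of~$\graph$ can join two agents that are both unmatched in~$M$.
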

\begin{proof}
  Let $m^* \in \reals^{|\edges|}$ be the unique integral solution
  of~\eqref{eq:max_match}.  The asymptotic convergence properties
  of~\eqref{eq:stable_dyn}, cf. Proposition~\ref{prop:stab}, guarantee
  that, for every $\epsilon > 0$, there exists $T > 0$ such that, for
  all $t \ge T$,
  \begin{align*}
    \epsilon >
    \begin{cases}
      |m_{i,j}(t)-1|, \quad &\text{if $m^*_{i,j} = 1$,}
      \\
      |m_{i,j}(t)|, \quad &\text{if $m_{i,j}^* = 0$.}
    \end{cases}
  \end{align*}
  Thus, taking $\epsilon < \tfrac{1}{2}$, it is straightforward to see
  that the matching induced by the implication
  \begin{align*}
    (i,j) \in \matching \quad \Leftrightarrow \quad \partner_i(m(t)) =
    j \text{ and } \partner_j(m(t)) = i,
  \end{align*}
  is well-defined, a maximum weight matching, and constant for all $t
  \ge T$. Then, considering only $t \ge T$ and applying
  Propositions~\ref{prop:balanced} and~\ref{prop:quasi}, we deduce
  that $t \mapsto (M,\allocation^b(t))$ converges to a Nash
  outcome. The fact that~\eqref{eq:nash_dyn} is distributed with
  respect to $2$-hop neighborhoods follows from its definition, which
  completes the proof.
\end{proof}

Finally, we comment on the robustness properties of the Nash
bargaining dynamics~\eqref{eq:nash_dyn} against perturbations such as
communication noise, measurement error, modeling uncertainties, or
disturbances. A central motivation for using the linear programming
dynamics~\eqref{eq:disc_dyn}, and continuous-time dynamics in general,
is that there exist various established robustness characterizations
for them. In particular, using previously established results
from~\cite{DR-JC:13-tac}, it holds that~\eqref{eq:nash_dyn} is a
`well-posed' dynamics, as defined in~\cite{RG-RGS-ART:09}. As a
straightforward consequence of~\cite[Theorem 7.21]{RG-RGS-ART:09}, the
Nash bargaining dynamics is robust to small perturbations, as we state
next.

\begin{corollary}\longthmtitle{Robustness to small
    perturbations}\label{cor:robustness}
  Given a graph $\graph$, assume there exists a stable outcome and let
  $t \rightarrow (\allocation^s(t),\allocation^b(t),\slack(t),m(t))$
  be a trajectory, starting from an initial point in $\reals^n_{\ge 0}
  \times \reals^n \times \reals_{\ge 0}^{|\edges|} \times
  \reals^{|\edges|}$, of the perturbed dynamics
  \begin{align*}
    (\dot{\allocation}^s,\dot{\allocation}^b,\dot{\slack},\dot{m}) =
    F^{\operatorname{Nash}}(\allocation^s \! + \! d_1,\allocation^b \!
    + \! d_2,\slack \! + \! d_3,m \! + \! d_4) \! + \! d_5
  \end{align*}
  where $d_1,d_2: \realsnonnegative \mapsto \reals^{n}$, $d_3,d_4 :
  \realsnonnegative \mapsto \reals^{|\edges|}$, and $d_5 :
  \realsnonnegative \mapsto \reals^{2n+2|\edges|}$ are
  disturbances. Then, for every $\epsilon > 0$, there exist $\delta,T
  > 0$ such that, for $\max_{i}\NormInf{d_i} < \delta$, the maximum
  weight matching $\matching$ is well-defined by the implication
  \begin{align*}
    (i,j) \in \matching \quad \Leftrightarrow \quad
    \partner_i(m(t)) = j \text{ and } \partner_j(m(t)) = i,
  \end{align*}
  for all $t \ge T$, and $t \mapsto (\matching, \allocation^b(t))$
  converges to an $\epsilon$-neighborhood of the set of Nash outcomes
  of~$\graph$.
\end{corollary}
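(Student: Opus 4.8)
The plan is to invoke the hybrid-systems robustness machinery of \cite{RG-RGS-ART:09} as applied to the dynamics \eqref{eq:nash_dyn}. First I would observe that, by the results carried over from \cite{DR-JC:13-tac}, the nominal dynamics $F^{\operatorname{Nash}}$ fits into the framework of \cite{RG-RGS-ART:09}: its right-hand side is obtained from measurable, locally bounded vector fields whose Filippov regularizations are outer semicontinuous, locally bounded, and convex-valued on the (closed) state space $\reals^n_{\ge 0} \times \reals^n \times \reals^{|\edges|}_{\ge 0} \times \reals^{|\edges|}$, so the dynamics is \emph{well-posed} (nominally well-posed in the sense of \cite[Def. 6.27]{RG-RGS-ART:09}). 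The balancing part \eqref{eq:nash_balance} switches according to the sets $\partner_i(m)$, but since these switches are driven by the continuous-state component $m$ and the regularized inclusion is taken, well-posedness is preserved. This is the hypothesis needed to apply the semiglobal practical robustness result \cite[Theorem 7.21]{RG-RGS-ART:09}.

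Next I would package the asymptotic convergence established in Theorem~\ref{th:nash} as the pre-asymptotic-stability statement that \cite[Theorem 7.21]{RG-RGS-ART:09} requires. Concretely, Theorem~\ref{th:nash} shows that from every initial condition the trajectory of \eqref{eq:nash_dyn} converges to the set
\begin{align*}
  \mathcal{A} := \{(\allocation^s,\allocation^b,\slack,m) : \allocation^s = \allocation^{s,*}, \; \slack = \slack^*, \; m = m^*, \; (\matching,\allocation^b) \text{ is a Nash outcome}\},
\end{align*}
where $(\allocation^{s,*},\slack^*,m^*)$ is the unique limit of the stable dynamics and $\matching$ the (unique) maximum weight matching. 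One checks that $\mathcal{A}$ is closed. Attractivity of $\mathcal{A}$ on the whole state space, together with well-posedness, gives (via the conversion between attractivity and practical/semiglobal stability for well-posed systems, e.g.\ \cite[Ch.~7]{RG-RGS-ART:09}) the key robustness conclusion: for every compact set of initial conditions and every $\epsilon > 0$ there is $\delta > 0$ so that solutions of the perturbed inclusion with disturbances of $\|\cdot\|_\infty$-magnitude below $\delta$ remain, after some time $T$, within an $\epsilon$-neighborhood of $\mathcal{A}$. Since the disturbances $d_1,\dots,d_5$ in the statement enter exactly as state-measurement and additive perturbations of $F^{\operatorname{Nash}}$, they are subsumed by the generic perturbation model of \cite[Sec.~7.4]{RG-RGS-ART:09}.

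To extract the two explicit claims of the corollary, I would argue as follows. For the matching-identification claim, note that being $\epsilon$-close to $\mathcal{A}$ with $\epsilon < \tfrac{1}{2}$ forces $|m_{i,j}(t) - 1| < \tfrac12$ when $m^*_{i,j} = 1$ and $|m_{i,j}(t)| < \tfrac12$ when $m^*_{i,j} = 0$; exactly as in the proof of Theorem~\ref{th:nash}, this makes $\partner_i(m(t)) = j \Leftrightarrow m^*_{i,j}=1$, so the induced matching equals $\matching$ and is constant for $t \ge T$. Once the correct matching is locked in, the balancing component of \eqref{eq:nash_dyn} coincides (up to the perturbations) with \eqref{eq:balance_dyn} for the fixed matching $\matching$, whose balanced outcomes are locally stable by Proposition~\ref{prop:local_stability} and globally attractive by Proposition~\ref{prop:balanced}; applying the same well-posed-robustness result to this sub-dynamics yields convergence of $t \mapsto (\matching,\allocation^b(t))$ to an $\epsilon$-neighborhood of the set of balanced — hence, by Proposition~\ref{prop:quasi}, Nash — outcomes. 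The main obstacle I anticipate is the careful verification that the discontinuous, switched dynamics \eqref{eq:nash_dyn} genuinely satisfies the hybrid basic conditions / well-posedness hypotheses of \cite{RG-RGS-ART:09} uniformly — in particular, handling the measure-zero sets where the $\partner_i$ comparisons or the next-best-neighbor sets are non-unique, and ensuring the two-stage argument (matching locks in, then balancing converges) composes without the time $T$ and tolerance $\delta$ degenerating; this is where one leans most heavily on the cascade structure and on the robustness properties already inherited from \cite{DR-JC:13-tac}.
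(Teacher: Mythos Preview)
Your proposal is correct and follows essentially the same approach as the paper. The paper does not give a formal proof of this corollary; it simply states in the preceding paragraph that the dynamics~\eqref{eq:nash_dyn} is well-posed in the sense of~\cite{RG-RGS-ART:09} (using results from~\cite{DR-JC:13-tac}) and that the corollary is then a ``straightforward consequence of~\cite[Theorem~7.21]{RG-RGS-ART:09}.'' Your write-up unpacks exactly this argument---verifying well-posedness, packaging Theorem~\ref{th:nash} as the required attractivity statement, invoking~\cite[Theorem~7.21]{RG-RGS-ART:09}, and then extracting the matching-identification and $\epsilon$-neighborhood claims via the same $\epsilon < \tfrac12$ reasoning used in the proof of Theorem~\ref{th:nash}---so it is a faithful (and more detailed) rendering of what the paper asserts.
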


\begin{figure}[hbt!]
  \centering
  \includegraphics[width=0.75\linewidth]{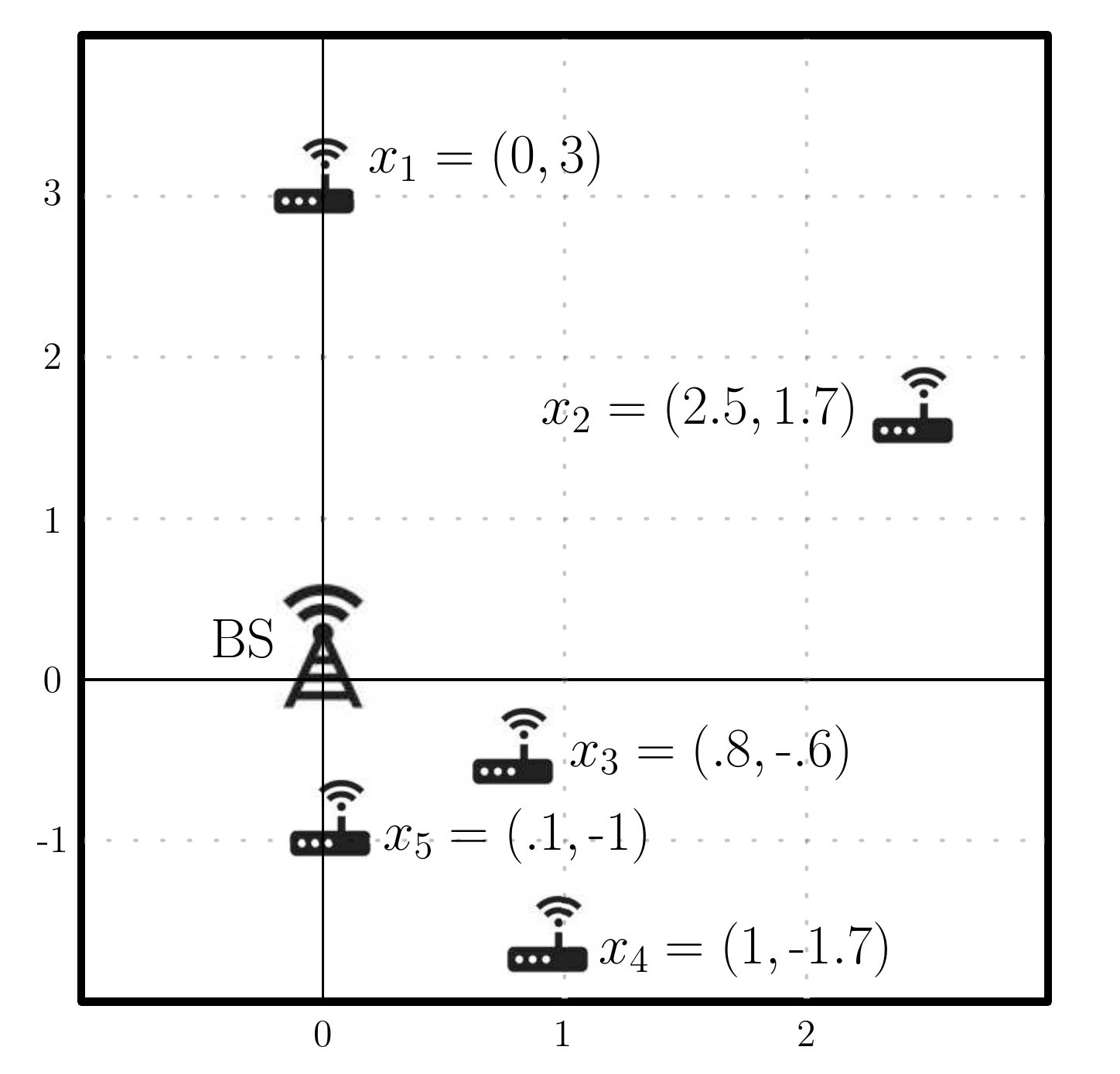}
  \caption{Spatial distribution of devices $\{1,\dots,5\}$ and the
    base station (BS).}\label{fig:distribution_of_devices}
\end{figure}

\begin{figure}[hbt!]
  \centering
  \includegraphics[width=0.95\linewidth]{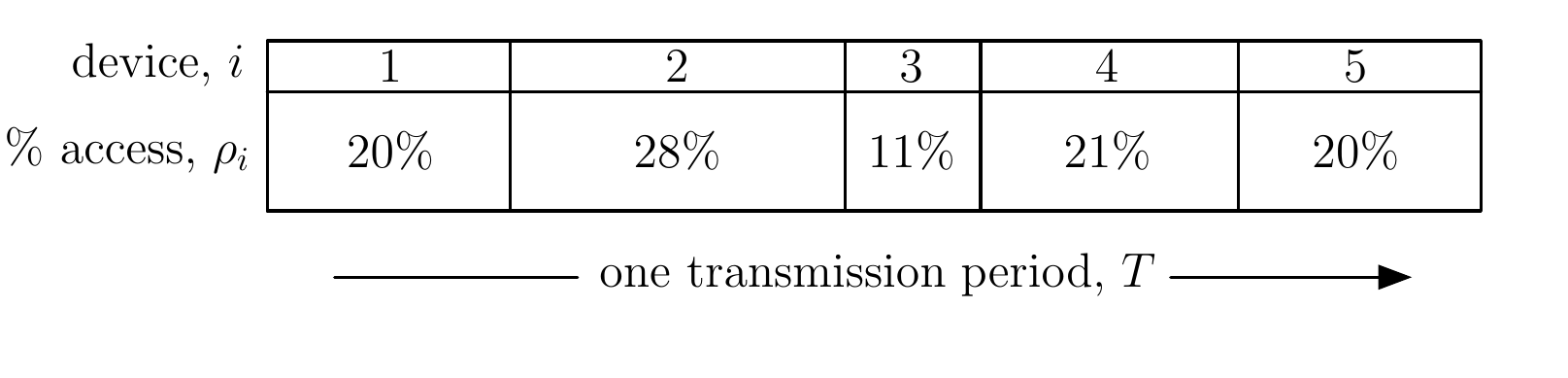}
  \caption{TDMA transmission time allocations for each device.}\label{fig:TDMA}
\end{figure}

\begin{figure}[hbt!]
  \centering
  \includegraphics[width=0.55\linewidth]{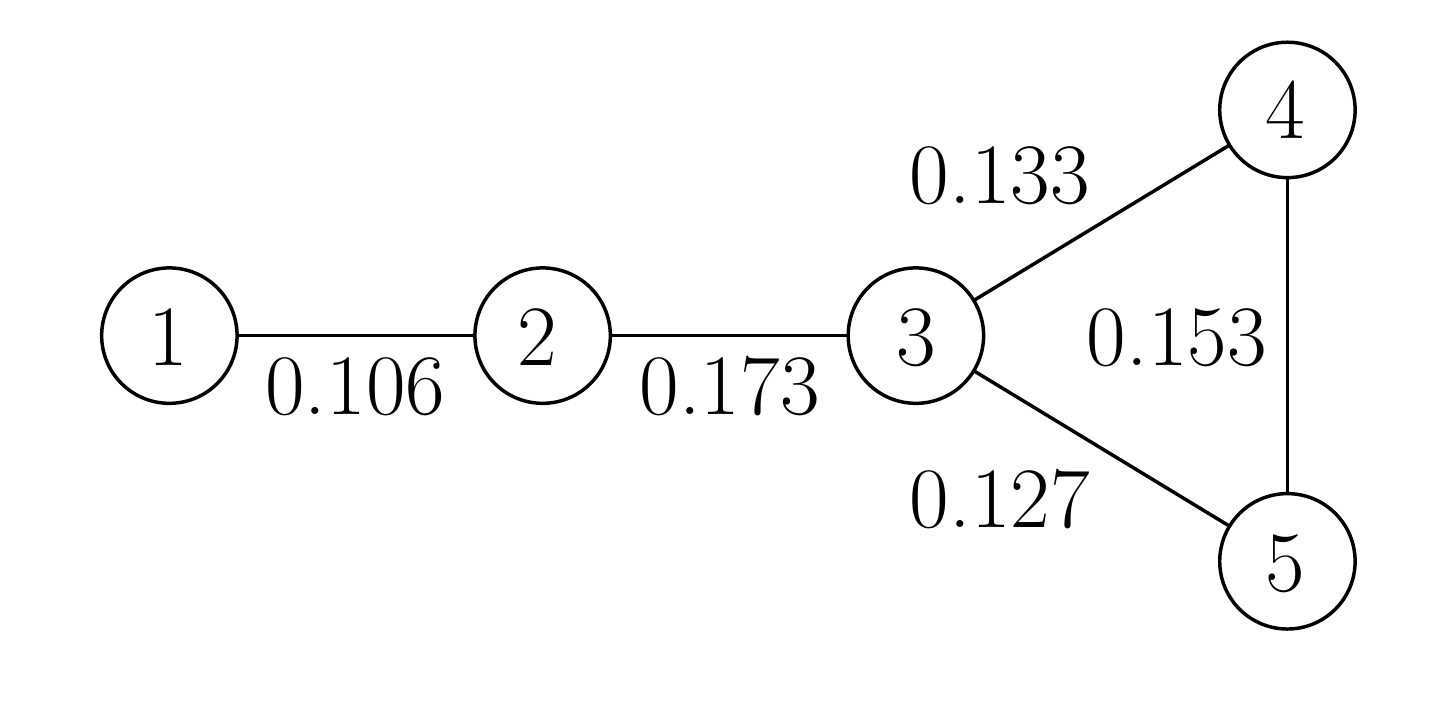}
  \caption{The bargaining graph resulting from the position and TDMA
    transmission time allocations for each device. Here, we have taken
    $P_{\max} = 3$.}\label{fig:bargain_graph}
\end{figure}

\myclearpage
\section{Application to multi-user wireless
  communication} \label{sec:sims}

In this section, we provide some simulation results of our proposed
Nash bargaining dynamics as applied to a multi-user wireless
communication scenario. The scenario we describe here is a simplified
version of the one found in~\cite{WS-ZH-MD-AH:09}, and we direct the
reader to that reference for a more detailed discussion on the
model. We assume that there are $n = 5$ single antenna devices
distributed spatially in an environment that send data to a fixed base
station. We denote the position of device $i \in \{1,\dots,5\}$ as
$x_i \in \reals^2$ and we assume without loss of generality that the
base station is located at the
origin. Figure~\ref{fig:distribution_of_devices} illustrates the
position of the devices. An individual device's transmission is
managed using a time division multiple access (TDMA) protocol. That
is, each device $i$ is assigned a certain percentage $\rho_i$ of a
transmission period of length $T$ in which it is allowed to transmit
as specified in Figure~\ref{fig:TDMA}. We use a commonly used model
for the capacity $c_i > 0$ of the communication channel from device
$i$ to the base station, which is a function of their relative
distance,
\begin{align*}
  c_i = \log( 1 + |x_i|^{-1}).
\end{align*}
In the above, we have taken various physical parameters (such as
transmit power constraints, path loss constants, and others) to be $1$
for the sake of presentation. Since $i$ only transmits for $\rho_i$
percent of each transmission period, the effective capacity of the
channel from device $i$ to base station is $\rho_i c_i$. It is
well-known in wireless communication~\cite{CW-XH-XG-GZ-JT:10} that
multiple antenna devices can improve the channel capacity. Thus,
devices $i$ and $j$ may decide to share their data and transmit a
multiplexed data signal in both $i$ and $j$'s allocated time slots. In
essence, $i$ and $j$ would behave as a single virtual $2$-antenna
device. The resulting channel capacity is given by
\begin{align*}
    c_{i,j} = \log( 1 + |x_i|^{-1} + |x_j|^{-1}),
\end{align*}
\begin{figure*}[hbt!]
  \centering \subfigure[Device
    allocations]{\includegraphics[width=0.4\linewidth]{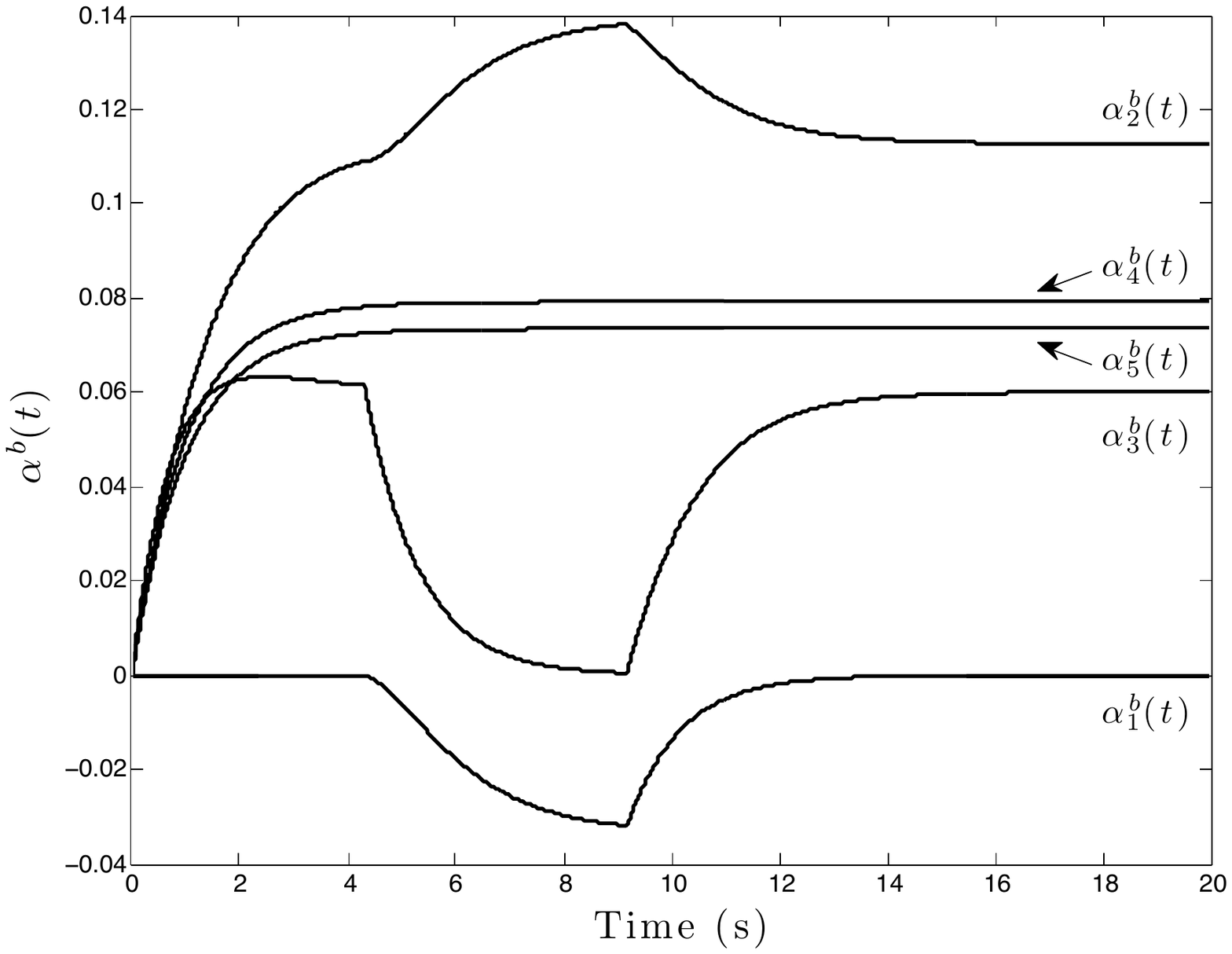}} \hspace{10mm}
  \subfigure[Matching
    states]{\includegraphics[width=0.4\linewidth]{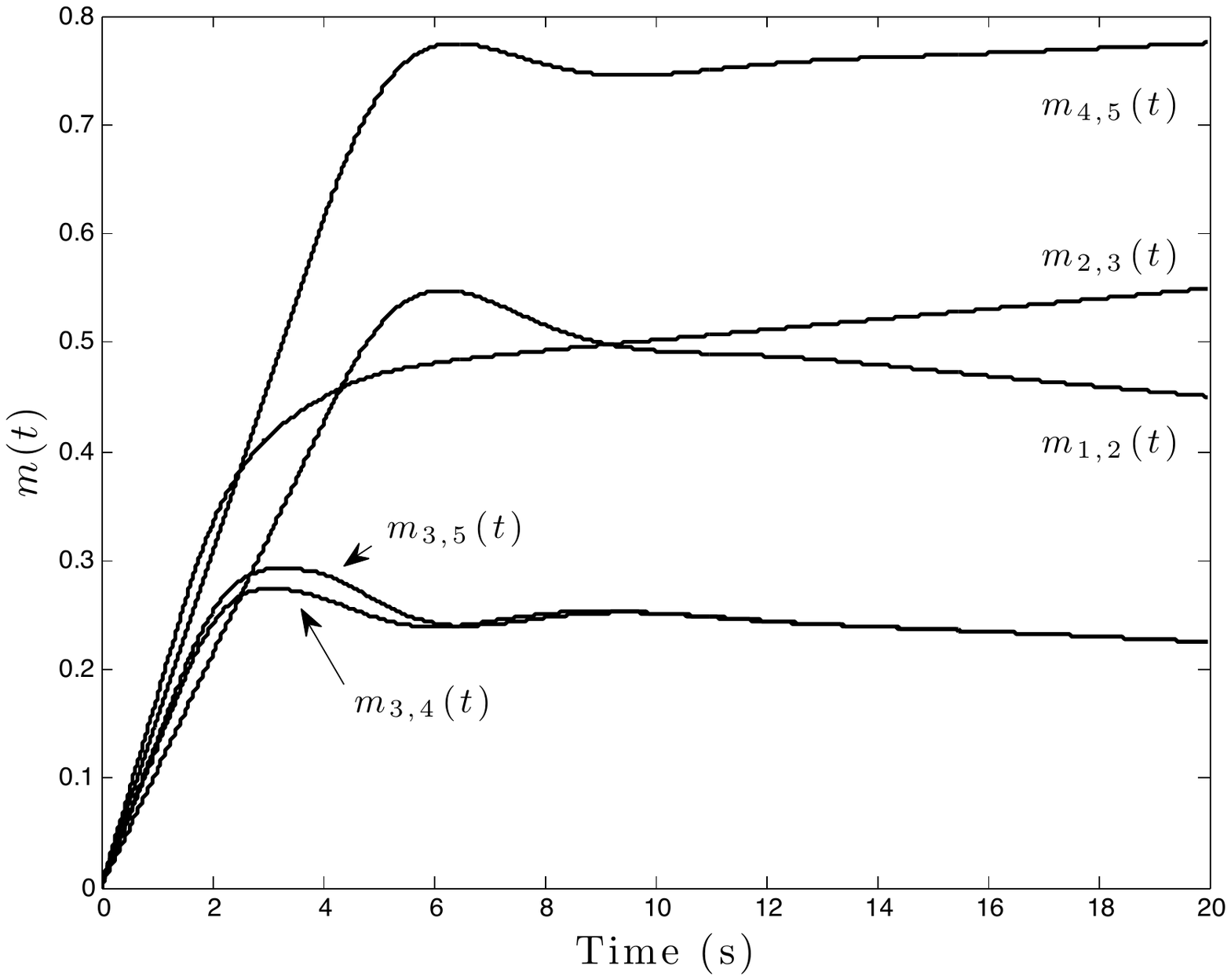}}
  \caption{Evolution of each device's allocation and the matching
    states in dynamics~\eqref{eq:nash_dyn}. At various times (i.e., $t
    \approx 4$ and $9$), certain devices change who they identify as
    partners in the matching which explains the kinks in the
    trajectories at those times. This occurs because of the evolution
    of the matching states in (b) and devices cannot correctly deduce
    the stable matching until $t \approx 9$. The final convergence of
    the matching states to $\{0,1\}^{|\edges|}$ (which we do not show
    for the sake of presentation) takes much longer than devices need
    to accurately identify a Nash outcome.
    }\label{fig:allocations_and_matching}
\end{figure*}
which is greater than both $c_i$ and $c_j$. However, there is a cost
to agent $i$ and $j$ cooperating in this way because their data must
be transmitted to each other. We assume that the device-to-device
transmissions do not interfere with the device-to-base station
transmissions. The power needed to transmit between $i$ and $j$ is
given by
\begin{align*}
  P_{i,j} = |x_i-x_j|.
\end{align*}
If this power is larger than some $P_{\max} > 0$, then $i$ and $j$
will not share their data. We can model this scenario via a graph
$\graph = (\vertices,\edges,\weights)$, where $\vertices =
\{1,\dots,5\}$ are the devices, edges correspond to whether or not $i$
and $j$ are willing, based on the power requirements, to share their
data
\begin{align*}
  (i,j) \in \edges \Leftrightarrow P_{i,j} \le P_{\max},
\end{align*}
and the edge weights represent the increase in effective channel
capacity should devices cooperate,
\begin{align*}
  w_{i,j} = (\rho_i+\rho_j)c_{i,j} - \rho_ic_i - \rho_jc_j, \quad
  \forall (i,j) \in \edges.
\end{align*}
Figure~\ref{fig:bargain_graph} shows this graph, using the data for
the scenario we consider. It is interesting to note that, besides
channel capacity and power constraints, one could incorporate other
factors into the edge weight definition. For example, if privacy is a
concern in the network, then devices may be less likely to share their
data with untrustworthy devices which can be modeled by a smaller edge
weight.

\begin{figure}[hbt!]
  \centering
  \includegraphics[width=0.7\linewidth]{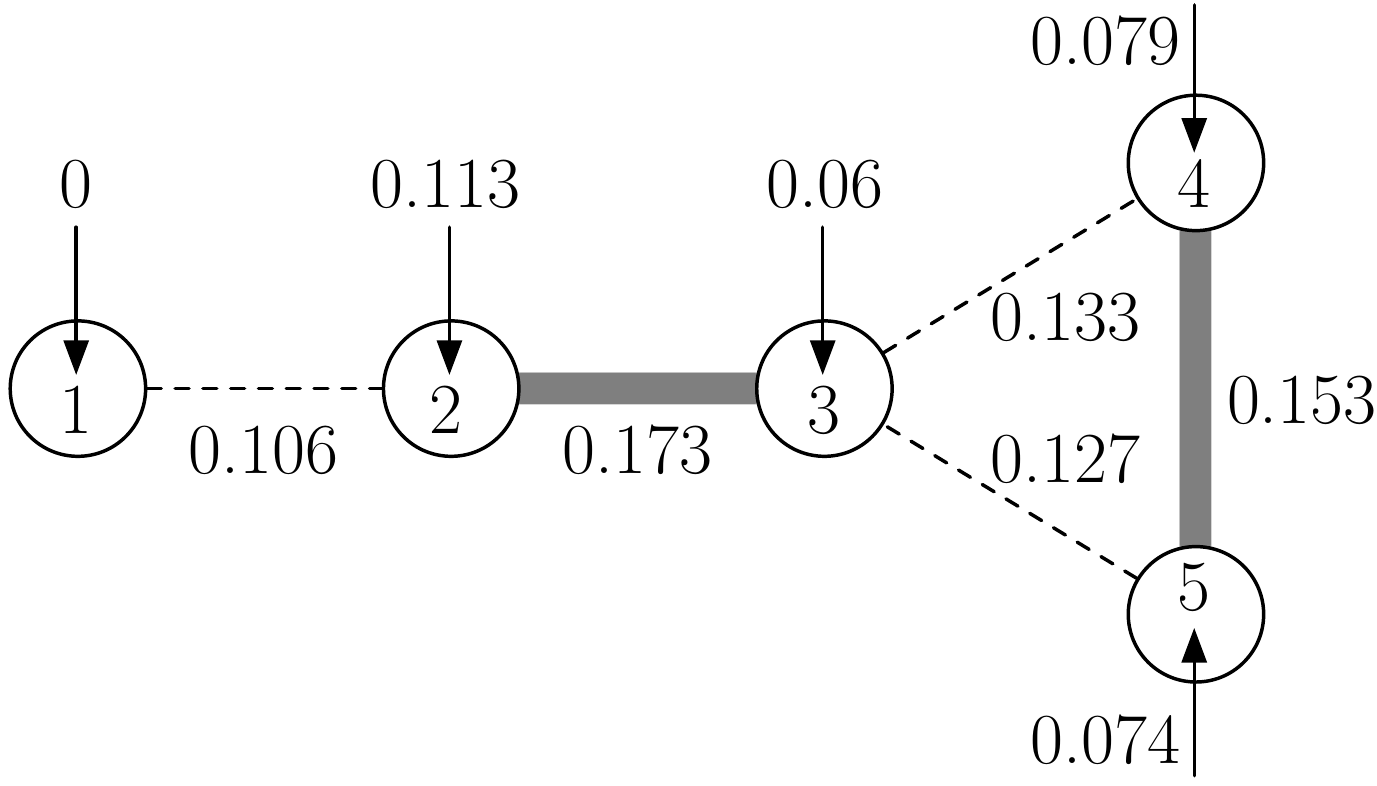}
  \caption{Nash outcome that is distributedly computed by
    devices. Device matchings are shown by thicker grey edges and
    allocations to each device are indicated with
    arrows}\label{fig:nash_graph}
\end{figure}

\begin{table}[hbt!]
  \caption{Improvements in capacity due to
    collaboration}\label{table:improvement}
  \begin{center}
  \begin{tabular}{| c | c | c | c |}
    \hline & Effective channel & Increase in effective & \% \\ Device
    & capacity without & channel capacity & improve- \\ &
    collaboration, $c_i$ & in Nash outcome, $\allocation^b_i$ &
    ment \\ \hline 1 & 0.288 & 0 & 0 \\ \hline 2 & 0.288 & 0.113 &
    39.2 \\ \hline 3 & 0.693 & 0.06 & 8.7 \\ \hline 4 & 0.693 & 0.079
    & 11.4 \\ \hline 5 & 0.406 & 0.074 & 18.2 \\ \hline \hline
    \{1,\dots,5\} & 0.441 & 0.070 & 15.8 \\ \hline
  \end{tabular}
  \end{center}
\end{table}

A matching $M$ in the context of this setting corresponds to disjoint
pairs of devices that decide to share their data and transmission time
slots in order to achieve a higher effective channel capacity. An
allocation corresponds to how the resulting improved bit rate is
divided between matched devices. For example, if $i$ is allocated an
amount of $\allocation^b_i$, then $i$ and $j$ will transmit their data
such that $i$'s data reaches the base station at a rate of $c_i +
\allocation^b_i$. The percent improvement in bit rate for $i$ is then
given by~${\allocation^b_i}/{c_i}$. Devices use the
dynamics~\eqref{eq:nash_dyn} to find, in a distributed way, a Nash
outcome for this problem. Figure~\ref{fig:allocations_and_matching}
reveals the resulting state trajectories and
Figure~\ref{fig:nash_graph} displays the final Nash outcome. The
percent improvements resulting from collaboration for each device are
collected in Table~\ref{table:improvement}. The last row in this table
show that the network-wide improvement is $15.8\%$. Before bargaining,
devices $1$ and $2$ have the lowest individual channel capacities and
would thus greatly benefit from collaboration. However, due to power
constraints, device $1$ can only match with device $2$, who in turn
prefers to match with device $3$. This explains why, in the end,
device $1$ is left
unmatched. Figure~\ref{fig:noisy_balanced_allocations} illustrates how
convergence is still achieved when noise is present in the devices'
dynamics, as forecasted by Corollary~\ref{cor:robustness}.


\begin{figure}[hbt!]
  \centering
  \includegraphics[width=0.8\linewidth]{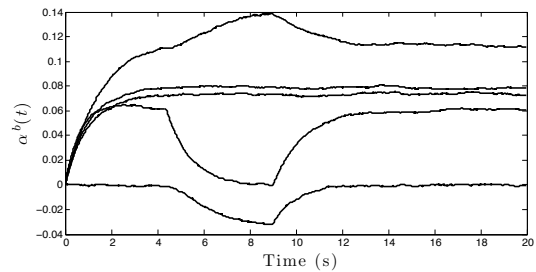}
  \caption{When the devices' dynamics are subjected to noise (normally
    distributed with zero mean and standard deviation~$0.01$), the
    stable matchings are still correctly deduced and devices'
    allocations converge to a neighborhood of the allocations in the
    Nash outcome.}\label{fig:noisy_balanced_allocations}
\end{figure}

\myclearpage
\section{Conclusions and future work}\label{sec:conclusions}

We have considered bargaining in dyadic-exchange networks, where
individual agents decide with whom (if any) to match and agree on an
allocation of a common good. For such scenarios, valid notions of
outcomes include stable, balanced, and Nash.  We have designed
provably correct distributed dynamics that asymptotically converge to
each of these classes of outcomes. Our technical approach combines
graph- and game-theoretic notions with techniques from set-valued
dynamics, stability theory, and distributed linear programming.  We
have illustrated the performance of the proposed coordination
algorithm in a wireless communication scenario, where we showed how
agent collaborations can, in a fair way, improve both individual and
network-wide performance.  Future work will include considering other
solution concepts on dyadic-exchange networks and applying our
techniques to multi-exchange networks (i.e., allowing coalitions of
more than two). In addition, we would like to study the rate of
convergence and establish more quantifiable robustness properties of
balancing dynamics; in particular, the effects of time delays,
adversarial agents, and dynamically changing system data. Finally, we
wish to apply our dynamics to other coordination tasks and implement
them on a multi-agent testbed.


\bibliographystyle{ieeetr}%
\bibliography{alias,Main,Main-add,JC}

\appendix

The following result is used in the proof of
Proposition~\ref{prop:local_stability} to establish the local
stability of each balanced allocation under the
dynamics~\eqref{eq:balance_dyn}.

\begin{lemma}\longthmtitle{Upper-semicontinuity of the
    next-best-neighbor sets map}\label{lem:nbn}
  Let $\allocation^{b,*} \in \reals^n$. Then there exists $\epsilon >
  0$ such that, for all $(i,j) \in \edges$ and all $\|\allocation^b -
  \allocation^{b,*}\| < \epsilon$, the following inclusion holds
  \begin{align*}
    \nextbest{i}{j}(\allocation^b) \subseteq
    \nextbest{i}{j}(\allocation^{b,*}).
  \end{align*}
\end{lemma}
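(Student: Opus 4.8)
The plan is to recognize this as the standard fact that the argmax of finitely many continuous functions is upper hemicontinuous — it can only shrink under small perturbations — and then to take a uniform radius over the finite edge set. First I would fix $(i,j)\in\edges$ and, for each $k\in\neigh(i)\setminus j$, abbreviate $g_k(\allocation^b) := \{\weight_{i,k}-\allocation^b_k\}_+$, so that by definition $\besta{i}{j}(\allocation^b)=\max_{k\in\neigh(i)\setminus j} g_k(\allocation^b)=:M(\allocation^b)$ and $\nextbest{i}{j}(\allocation^b)=\{k\in\neigh(i)\setminus j : g_k(\allocation^b)=M(\allocation^b)\}$. Since $t\mapsto\{t\}_+$ and $\allocation^b\mapsto\weight_{i,k}-\allocation^b_k$ are both nonexpansive, each $g_k$ is Lipschitz with constant $1$, and hence so is $M$; this is the only way the truncation $\{\cdot\}_+$ enters, and it causes no difficulty.

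Next I would dispatch the degenerate configurations before defining a gap. If $\neigh(i)\setminus j=\emptyset$, then both sides of the claimed inclusion are empty. If $\nextbest{i}{j}(\allocation^{b,*})=\neigh(i)\setminus j$ — which in particular occurs whenever $M(\allocation^{b,*})=0$, since then every $g_k(\allocation^{b,*})=0=M(\allocation^{b,*})$ — the inclusion is trivial, as $\nextbest{i}{j}$ always takes values inside $\neigh(i)\setminus j$. In the remaining case I set $A^*_{ij}:=\nextbest{i}{j}(\allocation^{b,*})$ and define the strictly positive quantity $\gamma_{ij}:=\min_{k\in(\neigh(i)\setminus j)\setminus A^*_{ij}}\big(M(\allocation^{b,*})-g_k(\allocation^{b,*})\big)>0$, the smallest slack between the optimal value and that of a non-optimal neighbor.

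The core step is to show that $\NormTwo{\allocation^b-\allocation^{b,*}}<\gamma_{ij}/2$ forces $\nextbest{i}{j}(\allocation^b)\subseteq A^*_{ij}$. By the $1$-Lipschitz property, $|g_k(\allocation^b)-g_k(\allocation^{b,*})|<\gamma_{ij}/2$ for every $k$, and likewise $|M(\allocation^b)-M(\allocation^{b,*})|<\gamma_{ij}/2$; hence for any $k\notin A^*_{ij}$,
\[
M(\allocation^b)-g_k(\allocation^b) > \Big(M(\allocation^{b,*})-\tfrac{\gamma_{ij}}{2}\Big)-\Big(g_k(\allocation^{b,*})+\tfrac{\gamma_{ij}}{2}\Big) = \big(M(\allocation^{b,*})-g_k(\allocation^{b,*})\big)-\gamma_{ij} \ge 0 ,
\]
so $k$ does not attain the maximum at $\allocation^b$, i.e. $k\notin\nextbest{i}{j}(\allocation^b)$. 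Finally, since $\edges$ is finite, I would take $\epsilon:=\min_{(i,j)\in\edges}\gamma_{ij}/2>0$, where degenerate edges impose no constraint on the minimum; then for all $(i,j)\in\edges$ and all $\allocation^b$ with $\NormTwo{\allocation^b-\allocation^{b,*}}<\epsilon$ we obtain $\nextbest{i}{j}(\allocation^b)\subseteq\nextbest{i}{j}(\allocation^{b,*})$, as claimed.

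I do not anticipate a genuine obstacle: the argument is elementary continuity bookkeeping. The only points requiring care — which is why I would handle them up front — are the truncation $\{\cdot\}_+$ (resolved by noting it is nonexpansive, so $g_k$ stays $1$-Lipschitz) and the degenerate cases (empty comparison set, a maximizing value of $0$, or every neighbor already optimal at $\allocation^{b,*}$), each of which makes the inclusion vacuous or trivial and so must be separated out before the gap $\gamma_{ij}$ is defined.
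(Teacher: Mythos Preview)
Your proof is correct and proceeds by a direct, constructive route, whereas the paper argues by contradiction via a sequential compactness argument: it assumes the inclusion fails along a sequence $\allocation^{b,k}\to\allocation^{b,*}$, extracts a fixed violating index $\hat\tau$ by finiteness of $\neigh(i)\setminus j$, and passes to the limit in the inequality $w_{i,\hat\tau}-\allocation^{b,k}_{\hat\tau}\ge w_{i,\tau}-\allocation^{b,k}_{\tau}$ to contradict $\hat\tau\notin\nextbest{i}{j}(\allocation^{b,*})$. Your approach has the advantage of producing an explicit radius $\epsilon=\min_{(i,j)}\gamma_{ij}/2$ in terms of the slacks at $\allocation^{b,*}$, and it makes the role of the truncation $\{\cdot\}_+$ and the degenerate cases transparent; the paper's argument is shorter on the page and sidesteps the case analysis, but is nonconstructive. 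Both reduce the uniform-in-$(i,j)$ statement to a per-edge statement by finiteness of $\edges$, so the overall architecture is the same; only the per-edge verification differs.
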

\begin{proof}
  Note that, since the number of edges is finite, it is enough to
  prove that such $\epsilon$ exists for each edge $(i,j) \in \edges$
  (because then one takes the minimum over all of them).  Therefore,
  let $(i,j) \in \edges$ and, arguing by contradiction, assume that
  for every $\epsilon>0$, there exists $\allocation^b$ with
  $\|\allocation^b - \allocation^{b,*} \| < \epsilon$ such that $
  \nextbest{i}{j}(\allocation^b) \not \subseteq
  \nextbest{i}{j}(\allocation^{b,*})$. Equivalently, suppose that
  $\{\allocation^{b,k}\}_{k = 1}^{\infty}$ is a sequence converging to
  $\allocation^{b,*}$ such that, for every $k$, there exists a $\tau^k
  \in \nextbest{i}{j}(\allocation^{b,k}) \setminus
  \nextbest{i}{j}(\allocation^{b,*})$. By definition of the
  next-best-neighbor set, it must be that
  \begin{align*}
    w_{i,\tau^k} - \allocation^{b,k}_{\tau^k} \ge w_{i,\tau} -
    \allocation^{b,k}_{\tau},
  \end{align*}
  for all $\tau \in \neigh(i) \setminus j$. Since $\neigh(i) \setminus
  j$ has a finite number of elements, there must be some $\hat{\tau}
  \in \neigh(i)$ such that $\tau^k = \hat{\tau}$ infinitely
  often. Therefore, let $\{k_{\ell}\}_{\ell=1}^{\infty}$ be a
  subsequence such that $\tau^{k_{\ell}} = \hat{\tau}$ for all
  $\ell$. Then
  \begin{align*}
    w_{i,\hat{\tau}} - \allocation^{b,k_{\ell}}_{\hat{\tau}} \ge
    w_{i,\tau} - \allocation^{b,k_{\ell}}_{\tau},
  \end{align*}
  for all $\tau \in \neigh(i) \setminus j$.  Taking now the limit as
  $\ell \rightarrow \infty$,
  \begin{align*}
    w_{i,\hat{\tau}} - \allocation^{b,*}_{\hat{\tau}} \ge w_{i,\tau} -
    \allocation^{b,*}_{\tau},
  \end{align*}
  for all $\tau \in \neigh(i) \setminus j$, which contradicts
  $\hat{\tau} \notin \nextbest{i}{j}(\allocation^{b,*})$.
\end{proof}

\end{document}